\newtheorem{theorem}{Theorem}
\newtheorem{corollary}[theorem]{Corollary}
\newtheorem{lemma}[theorem]{Lemma}
\newtheorem{observation}[theorem]{Observation}
\renewcommand{\AA}{{\cal A}}
\newcommand{\II}{{\cal I}}
\DeclareMathOperator{\diam}{\mathrm{diam}}
\DeclareMathOperator{\rad}{\mathrm{rad}}
\DeclareMathOperator{\vol}{\mathrm{vol}}
\DeclareMathOperator{\scol}{\mathrm{scol}}
\DeclareMathOperator{\wcol}{\mathrm{wcol}}
\DeclareMathOperator{\col}{\mathrm{col}}
\DeclareMathOperator{\sreach}{\mathrm{sreach}}
\DeclareMathOperator{\wreach}{\mathrm{wreach}}
\DeclareMathOperator{\decr}{\mathrm{decr}}
\newcommand{\footremember}[2]{%
   \footnote{#2}
    \newcounter{#1}
    \setcounter{#1}{\value{footnote}}%
}
\newcommand{\footrecall}[1]{%
    \footnotemark[\value{#1}]%
} 
\title{Weak Coloring Numbers of Intersection Graphs}
\author{
 Zden\v{e}k Dvo\v{r}\'ak\footremember{erccz}{Supported by the ERC-CZ project LL2005 (Algorithms and complexity within and beyond bounded expansion) of the Ministry of Education of Czech Republic.} \\
 {\small Charles University Prague} \\
 {\small \tt rakdver@iuuk.mff.cuni.cz} \\
 \and
 Jakub Pek\'arek\footrecall{erccz} \\
 {\small Charles University Prague} \\
 {\small \tt pekarej@iuuk.mff.cuni.cz} \\
 \and
 Torsten Ueckerdt \\
 {\small Karlsruhe Institute of Technology} \\
 {\small \tt torsten.ueckerdt@kit.edu} \\
 \and 
 Yelena Yuditsky \\
 {\small Universit\'e Libre de Bruxelles} \\
 {\small \tt yuditskyL@gmail.com}\\
}
\begin{document}

\maketitle

\begin{abstract}
 Weak and strong coloring numbers are generalizations of the degeneracy of a graph, where for each natural number $k$, we seek a vertex ordering such every vertex can (weakly respectively strongly) reach in $k$ steps only few vertices with lower index in the ordering.
 Both notions capture the sparsity of a graph or a graph class, and have interesting applications in the structural and algorithmic graph theory.
 Recently, the first author together with McCarty and Norin observed a natural volume-based upper bound for the strong coloring numbers
 of intersection graphs of well-behaved objects in $\mathbb{R}^d$, such as homothets of a centrally symmetric compact convex object, or comparable axis-aligned boxes.
 
 In this paper, we prove upper and lower bounds for the $k$-th weak coloring numbers of these classes of intersection graphs.
 As a consequence, we describe a natural graph class whose strong coloring numbers are polynomial in $k$, but the weak coloring numbers
 are exponential.  We also observe a surprising difference in terms of the dependence of the weak coloring numbers on the dimension
 between touching graphs of balls (single-exponential) and hypercubes (double-exponential).
\end{abstract}

\section{Introduction}

All the graphs we consider are finite, simple and undirected. For the basic graph theoretic notions used in this paper, see \cite{BonMur}. 

Given a linear ordering $\prec$ of vertices of a graph $G$ and an integer $k\ge 0$, a vertex $u$ is \emph{weakly $k$-reachable}
from a vertex $v$ if $u\preceq v$ and there exists a path in $G$ from $v$ to $u$ of length at most $k$ with all internal vertices greater than $u$,
and \emph{strongly $k$-reachable} if there exists such a path with all internal vertices greater than $v$.
Let $\wreach_{G,\prec,k}(v)$ and $\sreach_{G,\prec,k}(v)$ denote the sets of vertices that are weakly and strongly $k$-reachable from $v$,
respectively.  We define \emph{weak and strong coloring numbers} for a given ordering $\prec$ as
\begin{align*}
\wcol_{\prec,k}(G)&=\max_{v\in V(G)} |\wreach_{G,\prec,k}(v)|\\
\scol_{\prec,k}(G)&=\max_{v\in V(G)} |\sreach_{G,\prec,k}(v)|
\end{align*}
The weak and strong coloring numbers of a graph are then obtained by minimizing over all linear orderings of $V(G)$.
\begin{align*}
\wcol_k(G)&=\min_{\prec} \wcol_{\prec,k}(G)\\
\scol_k(G)&=\max_{\prec} \scol_{\prec,k}(G)
\end{align*}

Note that for $k=1$, both $\wreach_{G,\prec,1}(v)\setminus\{v\}$ and $\sreach_{G,\prec,1}(v)\setminus\{v\}$ consist of the neighbors of $v$ that
precede it in the ordering $\prec$, and thus $\scol_1(G)= \wcol_1(G)$ coincide with the \emph{coloring number} $\col(G)$ of the graph~$G$,
equal to the \emph{degeneracy} of $G$ plus one.

One can easily check the following observations:
\begin{align*}
\wcol_1(G)\le \wcol_2(G) &\le \cdots \le \wcol_{\infty}(G);\\
\scol_1(G)\le \scol_2(G) &\le \cdots \le \scol_{\infty}(G)\text{; and}\\
\scol_k(G)&\le \wcol_k(G),
\end{align*}
for any $k\in \mathbb{N}\cup \{\infty\}$.

Let $G$ be a graph. Let $\chi(G)$ and $\Delta(G)$ be the chromatic number and the maximum degree of $G$. One can easily check the following relation between those graph parameters and $\col(G)=\scol_1(G)=\wcol_1(G)$:
\[
\chi(G)\le \col(G)\le \Delta(G)+1.
\]

Weak coloring numbers were introduced by Kierstead and Yang \cite{kierstead2003orderings} and were used in particular to study marking and coloring games on graphs. They also showed the following inequality:
\[
\wcol_k(G)\le (\scol_k(G))^k,
\]
for any $k\in \mathbb{N}\cup \{\infty\}$ and a graph $G$. 

Zhu \cite{zhu2009colouring} showed that the weak and strong coloring numbers can be also used to study important notions of sparsity in a graph and characterize classes of graphs with bounded expansion and nowhere dense classes. A graph class ${\cal C}$ has \emph{bounded expansion} if there is a function $f:\mathbb{Z}^+\rightarrow \mathbb{R}$ such that for every $G\in {\cal C}$ and for any system $B_1,B_2,\ldots, B_t$ of pairwise vertex-disjoint subgraphs of $G$ of radius at most $r$, the minor obtained by contacting each $B_i$ into a vertex
and deleting all other vertices of $G$ has average degree at most $f(r)$. Classes of bounded expansion include planar graphs
and more generally all proper classes closed under taking minors or topological minors. See \cite{nesbook,PPSlecturenotes} for more information on this topic. 

Before proceeding to our results we mention some known bounds on maximum of $\wcol_k(G)$, $G\in {\cal C}$, for some specific classes of graphs ${\cal C}$. For the class ${\cal C}$ of outerplanar graphs the bound is $\Theta(k \log k)$ \cite{JM21}; for the class ${\cal C}$ of planar graphs the upper bound is $O(k^3)$ \cite{vdHOdMQRS16} and  the lower bound is $\Omega(k^2 \log k)$ \cite{JM21}; for the class ${\cal C}$ of graphs of Euler genus $g$ the upper bound is $O_g(k^3)$ \cite{vdHOdMQRS16}; for the class ${\cal C}$ of graphs of tree width at most $r$ the upper bound is $\binom{k+r}{k}$ \cite{covcol}.

\section{Our results} 
Let $S$ be a finite set of objects in $\mathbb{R}^d$.  The \emph{intersection graph} of $S$ is the graph $G$ with $V(G)=S$ and with $uv\in E(G)$
if and only if $u\cap v\neq\emptyset$.  For an integer $t\ge 1$, we say that the set $S$ is \emph{$t$-thin} if every point of $\mathbb{R}^d$
is contained in the interior of at most $t$ objects from $S$; in case $t=1$, we say $S$ is a \emph{touching representation} of $G$.
For example, a famous result of Koebe~\cite{koebe} states that a graph is planar if and only if it has a touching representation
by balls in $\mathbb{R}^2$.  As observed in~\cite{subconvex}, there is a very natural way of bounding the strong coloring numbers for thin intersection graphs of certain classes
of objects by ordering the vertices in a non-increasing order according to the size of the objects that represent the vertices.
In particular, this approach works in case the objects in $S$ are
\begin{itemize}
\item scaled and translated copies of the same centrally symmetric compact convex object (this includes intersection graphs of balls and of axis-aligned hypercubes); or
\item \emph{$b$-ball-like} for some real number $b\ge 1$, i.e., every $v\in S$ is a compact convex set satisfying
$\vol(v)\ge \vol(B(\diam(v)/2))/b$, where $B(a)$ is the ball in $\mathbb{R}^d$ of radius $a$ and $\diam(v)$ is the maximum
distance between any two points of $v$; or
\item \emph{comparable axis-aligned boxes}, i.e, $S$ is a set of axis-aligned boxes with the additional property that
for every $u,v\in S$, a translation of $u$ is a subset of $v$ or vice versa.
\end{itemize}
As we are going to build on this argument, let us give a sketch of it.  A linear ordering $\prec$ of a finite set of compact
objects $S$ is \emph{sizewise} if for all $u,v\in S$ such that $u\prec v$, we have $\diam(u)\ge \diam(v)$.
\begin{lemma}\label{lemma-strong}
Let $d$ and $t$ be positive integers.
Let $S$ be a $t$-thin finite set of compact convex objects in $\mathbb{R}^d$ and let $G$ be the intersection graph of $S$.
Let $\prec$ be a sizewise linear ordering of $S$.  For each integer $k\ge 1$,
\begin{itemize}
\item[(a)] if $S$ consists of scaled and translated copies of the same centrally symmetric object, or if $S$ is a set of comparable axis-aligned boxes,
then $\scol_{\prec,k}(G)\le t(2k+1)^d$, and
\item[(b)] if $S$ consists of $b$-ball-like objects for a real number $b\ge 1$, then $\scol_{\prec,k}(G)\le bt(2k+2)^d$.
\end{itemize}
\end{lemma}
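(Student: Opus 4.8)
The plan is to fix a vertex $v\in S$, write $D=\diam(v)$, pick a point $p\in v$, and bound $N:=|\sreach_{G,\prec,k}(v)|$ by a volume-counting argument. The first ingredient is a locality observation: if $u\in\sreach_{G,\prec,k}(v)$ is witnessed by a path $v=w_0,w_1,\dots,w_\ell=u$ with $\ell\le k$ and $w_1,\dots,w_{\ell-1}$ later than $v$ in $\prec$, then $\diam(w_i)\le D$ for $0\le i\le\ell-1$ --- this is where the sizewise ordering is used. Picking $q_i\in w_{i-1}\cap w_i$ and setting $q_0=p$, the triangle inequality gives $\lVert q_\ell-p\rVert\le\sum_{i=1}^{\ell}\diam(w_{i-1})\le\ell D\le kD$, so $u$ contains a point $z_u:=q_\ell$ with $\lVert z_u-p\rVert\le kD$. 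Crucially, $u$ itself may still be far larger than $v$ and extend well outside $B(p,kD)$; handling such $u$ is the heart of the argument.

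For part~(a) I would strengthen this to: \emph{$u$ contains a translated copy $P_u$ of the body $\lambda_v K$ (in the homothet case $v=\lambda_v K+c_v$), respectively of $v$ (in the comparable-boxes case), with $P_u$ contained in one fixed $(2k+1)$-fold dilate $Q$ of $v$.} Each internal $w_i$, being a shrunk homothet (resp.\ a translate of a sub-box) of $v$, is contained in a translate of the shape of $v$; consecutive such translates intersect, so their translation vectors differ by an element of a $2$-dilate of the (centrally symmetric) shape of $v$, and telescoping over the $\le k$ steps confines $z_u$ to a $(2k-1)$-dilate of $v$ about its center. To plant a full $v$-sized copy inside a possibly huge $u$ near $z_u$, use that $u=\lambda_u K+c_u$ with $\lambda_u\ge\lambda_v$: writing $z_u=\lambda_u q+c_u$ with $q\in K$, one has $\lambda_v K+(z_u-\lambda_v q)\subseteq\lambda_u K+c_u=u$ because each point of $\lambda_v K$, after the required affine change, becomes a convex combination of a point of $K$ and $q$; the box case is the analogous coordinatewise ``slide a translate of $v$ inside $u$ onto $z_u$''. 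With all the $P_u$ lying in $Q$ and each $P_u\subseteq u$, $t$-thinness of $S$ bounds their number: a point lies in the interior of at most $t$ members of $S$, hence of at most $t$ of the $P_u$, so $N\cdot\vol(P_u)\le t\,\vol(Q)=t(2k+1)^d\vol(P_u)$ and $N\le t(2k+1)^d$.

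For part~(b) the clamping trick is unavailable, as a general $b$-ball-like object is not a homothet of $v$; instead I would use the defining inequality $\vol(u)\ge\vol(B(\diam(u)/2))/b\ge\vol(B(D/2))/b$ together with the claim that each $u\in\sreach_{G,\prec,k}(v)$ already contributes volume at least $\vol(B(D/2))/b$ to $B(p,(k+1)D)$. If $\diam(u)$ is comparable to $D$ this is immediate, since then $u\subseteq B(z_u,\diam(u))\subseteq B(p,(k+1)D)$. If $\diam(u)$ is large it follows from fatness: the $b$-ball-like inequality forbids $u$ from being pinched near $z_u$, so the convex chunk $u\cap B(z_u,D)$ still has volume on the order of $\vol(B(D/2))/b$. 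Granting this, $t$-thinness gives $N\cdot\vol(B(D/2))/b\le t\,\vol(B((k+1)D))$, i.e.\ $N\le bt(2(k+1))^d=bt(2k+2)^d$.

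The step I expect to be the main obstacle is precisely the treatment of reachable objects $u$ that are much larger than $v$: they are not confined to any ball around $p$, so one cannot naively charge their volume against a bounded region. In part~(a) this is dispatched cleanly by the convex-combination (resp.\ clamping) step that pins a $v$-sized copy near $z_u$; in part~(b) it requires quantifying the fatness consequence of the $b$-ball-like condition. Everything else --- the telescoping bound and the volume integration against $t$-thinness --- is routine.
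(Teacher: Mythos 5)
Your part~(a) is correct and is essentially the paper's argument: the paper likewise confines each strongly $k$-reachable $u$ to meeting a bounded dilate of $v$ (via the sizewise ordering and a telescoping/containment step) and then replaces $u$ by a translate $u'$ of $v$ with $u'\subseteq u$ meeting that dilate, finishing with the same $t$-thinness volume count against the $(2k+1)$-dilate; your convex-combination and coordinatewise-sliding justifications are a correct elaboration of what the paper states as an observation.

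Part~(b), however, has a genuine gap exactly at the step you flag. Your case rests on the claim that for every reachable $u$ (possibly with $\diam(u)$ much larger than $D$) the chunk $u\cap B(z_u,D)$ has volume at least (a constant times) $\vol(B(D/2))/b$, and you only assert this (``Granting this\dots''), saying it requires quantifying a fatness consequence of the $b$-ball-like condition. That quantification is the crux of (b), not a routine afterthought: the hypothesis is a single global volume inequality and does not by itself obviously prevent $u$ from being locally pinched near $z_u$, so as written the proof of (b) is incomplete. Moreover, even granting the claim only ``on the order of'' $\vol(B(D/2))/b$ you would get $N\le C_d\,bt(2k+2)^d$ rather than the stated bound, while your final display silently uses the constant-$1$ version. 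The claim is in fact true, and the one-line proof is precisely the clamping you declared unavailable, applied to $u$ rather than to $v$: take the homothety of $u$ about $z_u$ with ratio $D/\diam(u)\le 1$, i.e.\ $u'=z_u+\tfrac{D}{\diam(u)}(u-z_u)$. By convexity $u'\subseteq u$; since $z_u\in u'$ and $\diam(u')=D$, we get $u'\subseteq B(z_u,D)\subseteq B(p,(k+1)D)$; and scaling the $b$-ball-like inequality gives $\vol(u')=\bigl(\tfrac{D}{\diam(u)}\bigr)^d\vol(u)\ge \vol(B(D/2))/b$. This shrunken copy is exactly the object $u'$ the paper uses in case (b), and inserting it into your $t$-thinness count yields $N\le bt(2k+2)^d$ with the stated constant.
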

\begin{proof}
Consider a vertex $v\in V(G)$; we need to provide an upper bound on $|\sreach_{G,\prec,k}(v)|$.  For any $m\ge 0$, in case (a) let $B_m(v)$ be the object obtained by scaling $v$ by the factor of $2m+1$, with the center $p$
of $v$ being the fixed point; i.e., $B_m(v)=\{p+(2m+1)(q-p)\colon q\in v\}$.  In case (b),
let $B_m(v)$ be a ball of radius $(m+1)\diam(v)$ centered at an arbitrarily chosen point of $v$.

For each $u\in\sreach_{G,\prec,k}(v)$, observe that $u\cap B_{k-1}(v)\neq\emptyset$, as $u$ is joined to $v$ through a path with at
most $k-1$ internal vertices, each represented by an object smaller or equal to $v$ in size.
In case (a), observe that there exists a translation $u'$ of $v$ such that $u'\subseteq u$ and $u'\cap B_{k-1}(v)\neq\emptyset$.
In case (b), let $u'$ be a scaled translation of $u$ such that $u'\subseteq u$, $u'\cap B_{k-1}(v)\neq\emptyset$, and $\diam(u')=\diam(v)$.
Note that in the former case we have $\vol(u')=\vol(v)=(2k+1)^{-d}\vol(B_k(v))$, and in the latter case we have
\begin{align*}
\vol(u')&=\frac{\diam^d(v)}{\diam^d(u)}\vol(u)\ge \frac{\diam^d(v)}{b\diam^d(u)}\vol(B(\diam(u)/2))\\
&=b^{-1}\vol(B(\diam(v)/2))=b^{-1}(2k+2)^{-d}\vol(B_k(v)).
\end{align*}
In either case, observe that $u'\subseteq B_k(v)$, and since $S$ is $t$-thin, we have
$$\sum_{u\in \sreach_{G,\prec,k}(v)} \vol(u')\le t\vol(B_k(v)).$$
Therefore, $|\sreach_{G,\prec,k}(v)|\le t(2k+1)^d$ in case (a) and $|\sreach_{G,\prec,k}(v)|\le bt(2k+2)^d$ in case (b).
\end{proof}
That is, the strong coloring numbers of these graph classes are polynomial in $k$, with a uniform ordering of vertices that
works for all values of $k$.  For weak coloring numbers, a general upper bound is as follows.
\begin{observation}\label{obs-weak}
For any graph $G$, a linear ordering $\prec$ of its vertices, and an integer $k\ge 1$,
$$\wcol_{\prec,k}(G)\le \sum_{i=1}^k \scol_{\prec,i}(G)\wcol_{\prec,k-i}(G).$$
In particular, if there exists $c>1$ such that $\scol_{\prec,k}(G)\le c^k$ for every $k\ge 1$, then
$\wcol_{\prec,k}(G)\le (2c)^k$ for every $k\ge 1$.
\end{observation}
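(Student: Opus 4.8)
The plan is to prove a path–splitting claim and then feed it into a geometric–series estimate. The claim: for every $k\ge 1$, if $u\in\wreach_{G,\prec,k}(v)$ then there are an index $i\in\{1,\dots,k\}$ and a vertex $w\in\sreach_{G,\prec,i}(v)$ with $u\in\wreach_{G,\prec,k-i}(w)$. To prove it, fix a witnessing path $v=x_0,x_1,\dots,x_\ell=u$ with $\ell\le k$ and all internal vertices $\succ u$, and (when $u\neq v$) let $i$ be the least index in $\{1,\dots,\ell\}$ with $x_i\preceq v$; such an $i$ exists since $x_\ell=u\preceq v$. By minimality, $x_1,\dots,x_{i-1}$ all exceed $v$, so the prefix $x_0,\dots,x_i$ certifies $x_i\in\sreach_{G,\prec,i}(v)$. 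Moreover $x_{i+1},\dots,x_{\ell-1}$ are internal vertices of the original path and $x_i$ is either such a vertex or equals $u$, so each of them is $\succeq u$; hence the suffix $x_i,\dots,x_\ell$ certifies $u\in\wreach_{G,\prec,\ell-i}(w)\subseteq\wreach_{G,\prec,k-i}(w)$ where $w=x_i$. The degenerate case $u=v$ is covered by $i=k$, $w=v$, using $v\in\sreach_{G,\prec,k}(v)$ (trivial path) and $v\in\wreach_{G,\prec,0}(v)$.

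With the claim in hand, fix $v$. Every $u\in\wreach_{G,\prec,k}(v)$ lies in some $\wreach_{G,\prec,k-i}(w)$ with $1\le i\le k$ and $w\in\sreach_{G,\prec,i}(v)$, so
\[
\bigl|\wreach_{G,\prec,k}(v)\bigr|\ \le\ \sum_{i=1}^{k}\ \sum_{w\in\sreach_{G,\prec,i}(v)}\bigl|\wreach_{G,\prec,k-i}(w)\bigr|\ \le\ \sum_{i=1}^{k}\scol_{\prec,i}(G)\,\wcol_{\prec,k-i}(G),
\]
and taking the maximum over $v$ gives the first inequality.

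For the ``in particular'' part I would induct on $k$, the base case being $\wcol_{\prec,0}(G)=1$. Assuming $\scol_{\prec,i}(G)\le c^i$ for all $i\ge1$ and $\wcol_{\prec,j}(G)\le(2c)^j$ for all $j<k$, the first inequality yields
\[
\wcol_{\prec,k}(G)\ \le\ \sum_{i=1}^{k}c^{i}(2c)^{k-i}\ =\ (2c)^{k}\sum_{i=1}^{k}2^{-i}\ =\ (2c)^{k}\bigl(1-2^{-k}\bigr)\ \le\ (2c)^{k}.
\]

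The one point requiring care is the choice of the breakpoint $x_i$ in the claim: taking $i$ to be the first time the path reaches a vertex at or below $v$ is precisely what makes the prefix a strong-reachability witness (no earlier internal vertex is $\preceq v$) while simultaneously making the suffix a weak-reachability witness (every later internal vertex, and $x_i$ itself, stays $\succeq u$). Everything else — the union bound, the passage to $\scol$ and $\wcol$, and the telescoping of the geometric series — is routine.
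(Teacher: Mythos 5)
Your proof is correct, and it is exactly the standard argument the paper has in mind when stating this as an observation without proof: split a weak-reachability witness path at the first vertex that is $\preceq v$, so the prefix certifies strong $i$-reachability and the suffix certifies weak $(k-i)$-reachability, then sum and run the geometric-series induction. The only delicate points (the breakpoint choice making both halves valid witnesses, and the degenerate case $u=v$ handled via $\wcol_{\prec,0}=1$) are handled properly.
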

For graphs from the classes described in Lemma~\ref{lemma-strong}, we obtain an exponential bound on the weak coloring numbers,
more precisely $\wcol_k(G)\le \bigl(2t3^d\bigr)^k$ in case (a) and $\wcol_k(G)\le \bigl(2bt4^d\bigr)^k$ in case (b).

Joret and Wood (see~\cite{espsublin}) conjectured that every class of graphs with polynomial strong coloring numbers also has polynomial
weak coloring numbers.  This turns out not to be the case; Grohe et al.~\cite{covcol} showed that the class of graphs obtained by subdividing
each edge of the graph the number of times equal to its treewidth has superpolynomial coloring numbers.  However, one could still
expect this conjecture to hold for ``natural'' graph classes, and thus we ask whether the weak coloring numbers are polynomial for the
graph classes described in Lemma~\ref{lemma-strong}.  On the positive side, we obtain the following
result.
\begin{theorem}\label{thm-weak}
Let $d$ and $t$ be positive integers.
Let $S$ be a $t$-thin finite set of compact convex objects in $\mathbb{R}^d$ and let $G$ be the intersection graph of $S$.
Let $\prec$ be a sizewise linear ordering of $S$.  For each integer $k\ge 1$:
\begin{itemize}
\item[(a)] If $S$ consists of scaled and translated copies of the same centrally symmetric object,
then $$\wcol_{\prec,k}(G)\le t\max(1,\lceil \log_2 k\rceil)(4k-1)^d\binom{k+t5^d+2}{t5^d+2}.$$
\item[(b)] If $S$ consists of $b$-ball-like objects for a real number $b\ge 1$, then
$$\wcol_{\prec,k}(G)\le tb\max(1,\lceil \log_2 k\rceil)(4k)^d\binom{k+tb6^d+2}{tb6^d+2}.$$
\end{itemize}
Moreover, there exists $k_0$ (depending only on $d$) such that if $S$ consists of balls, then for every $k\ge k_0$,
$$\wcol_{\prec,k}(G)\le t\max(1,\lceil \log_2 k\rceil)(4k-1)^d\binom{k+2t+2}{2t+2}.$$ 
\end{theorem}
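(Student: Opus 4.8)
The plan is to fix a vertex $v\in V(G)$ and bound $|\wreach_{G,\prec,k}(v)|$ by classifying the weakly $k$-reachable vertices according to two parameters: the length of the shortest witnessing path, and the size (diameter) class of the object representing $v$ relative to the objects on the path. The starting point is the same geometric scaling device used in Lemma~\ref{lemma-strong}: for $u\in\wreach_{G,\prec,k}(v)$, a witnessing path of length $\ell\le k$ has all internal vertices later than $u$ in $\prec$, but these internal vertices need not be smaller than $v$ --- this is exactly why the strong-coloring argument fails and why an extra combinatorial factor appears. I would first handle the ``local'' case where all internal vertices of the path have diameter within a constant factor of $\diam(v)$; here the path lives in a bounded dilate of $v$, and a volume/packing argument like Lemma~\ref{lemma-strong} gives a $\operatorname{poly}(k)$-type bound, accounting for the $(4k-1)^d$ resp.\ $(4k)^d$ factors (the $4$ rather than $2$ coming from allowing internal vertices up to twice the size of $v$, doubling the radius of the reachability ball).

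The core of the argument is a recursion on the diameter scale, which is where the binomial coefficient and the $\log_2 k$ factor come from. Order the distinct diameter values appearing among objects that can serve as ``large'' internal vertices; a weakly reachable vertex $u$ is reached from $v$ via a path that may dip up to some maximal diameter, and I would bound the number of such $u$ by summing over which of at most $\lceil\log_2 k\rceil$ dyadic scales the path passes through. The key structural claim is: if a path from $v$ to $u$ uses an internal vertex $w$ with $\diam(w)$ much larger than $\diam(v)$, then $w\in\wreach_{G,\prec,k}(v)$ as well, and moreover $u\in\wreach_{G,\prec,k-1}(w)$ (or similar), so that reaching $u$ decomposes as: first reach one of a bounded number of ``large anchors'' $w$, then reach $u$ from $w$ with a smaller budget. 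Iterating this across the $\le \lceil\log_2 k\rceil$ scales, and using at each scale a Lemma~\ref{lemma-strong}-style bound of the form $t5^d$ (resp.\ $tb6^d$) on the number of anchors of a given scale meeting a fixed ball, produces a count of the shape $\binom{k+N+2}{N+2}$ with $N=t5^d$ (resp.\ $tb6^d$): this binomial is the number of ways to distribute the length budget $k$ among the $N+2$ ``charges'' incurred, exactly as in the treewidth bound $\binom{k+r}{k}$ of \cite{covcol}. The factor $\max(1,\lceil\log_2 k\rceil)$ records the choice of which dyadic scale is the maximal one reached.

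For the improved ``moreover'' statement for balls, the only change is in the packing bound for anchors: for balls one does not need the crude $5^d$ bound but can instead observe that a set of pairwise-intersecting balls all of roughly the same size, each meeting a fixed small ball, has size $O(t)$ after accounting for thinness --- concretely $2t+2$ works once $k$ is large enough that the scale ratios are controlled --- because a ball of diameter $\delta$ meeting a fixed ball and contained in a dilate of it can be replaced by a concentric sub-ball of controlled volume, and $t$-thinness caps how many disjoint such sub-balls fit. This swaps $t5^d\to 2t$ in the exponent of the binomial, and the $k_0$ is where the ``roughly the same size'' approximations and the greedy sub-ball replacement become valid.

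I expect the main obstacle to be making the scale-recursion bookkeeping precise: one must carefully define the ``anchor'' at each dyadic scale so that (i) the number of anchors reachable from $v$ at that scale meeting the relevant ball is genuinely bounded by a constant (via Lemma~\ref{lemma-strong}-type packing), and (ii) the path budget genuinely decreases by a controlled amount each time a new, larger scale is entered, so that the total number of scale-transitions is $O(\log k)$ and the composed count telescopes into a single binomial coefficient rather than a product of $\log k$ factors. Getting the constants in the binomial ($+2$ in $\binom{k+N+2}{N+2}$) and the precise exponents ($4k-1$ vs.\ $4k$, $5^d$ vs.\ $6^d$) right will require the same careful translation-and-rescaling bookkeeping as in the proof of Lemma~\ref{lemma-strong}, extended to handle internal vertices larger than $v$.
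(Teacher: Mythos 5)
Your plan has the right general shape (per-scale packing bounds plus a treewidth/pathwidth-style binomial for distributing the length budget), but it omits precisely the ingredient that makes the count a single binomial, and the bookkeeping you do specify would fail. A weakly $k$-reachable vertex, and the internal vertices of its witnessing path, can be arbitrarily larger than $v$ (a tiny object may touch a huge one directly), so it is false that a witnessing path passes through at most $\lceil\log_2 k\rceil$ dyadic scales, and the $\max(1,\lceil\log_2 k\rceil)$ factor cannot be "the choice of the maximal dyadic scale reached". In the paper the relevant scales are powers of $k$, not of $2$: the levels $L_i=\{x\colon k^i r(v)\le r(x)<k^{i+1}r(v)\}$ may be arbitrarily many, and each is split into $\lceil\log_2 k\rceil$ dyadic subclasses of size at most $f(2k-2)=t(4k-1)^d$ (resp.\ $tb(4k)^d$) by the packing property --- that is where the logarithmic factor actually comes from. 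More importantly, the obstacle you yourself flag --- that iterating ``reach an anchor, recurse with smaller budget'' across scales naively gives a product of per-scale factors (quasi-polynomial), not one binomial $\binom{k+e+2}{e+2}$ --- is left unresolved in your proposal. The paper resolves it by building an auxiliary graph $H$ on $\wreach_{G,\prec,k}(v)$ whose edges come from subpaths of witnessing paths (this formalizes your anchor idea correctly: the successive $\prec$-minima along a witnessing path form a decreasing path in $H$ of total weighted length at most $k$; note that an arbitrary large internal vertex $w$ need not itself be weakly reachable, only such ``record'' vertices are), and then invoking a separate counting lemma for decreasing paths (Lemma~\ref{lemma-pw}, Corollary~\ref{cor-copw}): if every level prefix $L_0\cup\cdots\cup L_i$ has neighbors in at most $w$ higher levels, then the number of endpoints of decreasing paths of length $\le k$ is at most $m\binom{k+w}{w}$. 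The width bound $w\le e+2$ is exactly what the abstract property's part (ii) provides, using that crossing a power-of-$k$ level multiplies sizes by at least $k$. Without this composition mechanism (or an equivalent), your argument does not reach the stated bound.

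Your sketch of the improved bound for balls also misses the key geometric point: the competing balls there are not ``of roughly the same size'' --- by construction their radii grow geometrically ($r(u_i)\ge a^i s r(v)$) --- and the bound $e=2t$ (hence $2t+2$ in the binomial) comes from the fact that each sufficiently huge ball meeting $B_s(v)$ covers almost half the volume of a fixed reference ball $B_{ls}(v)$ (it approximates a half-space), so $t$-thinness caps their number at $2t$ independently of the dimension; balls of comparable size meeting a small ball would instead be bounded only by a dimension-dependent packing constant. The threshold $k_0$ is the constant $a$ from this half-space approximation, entering through the requirement $k\ge a$ in the abstract theorem.
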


This theorem is qualitatively tight in several surprising aspects, summarized in the following result.
\begin{theorem}\label{thm-lb}
For every positive integer $k$:
\begin{itemize}
\item[(i)] There exists a touching graph $F_k$ of comparable axis-aligned boxes in $\mathbb{R}^3$ such that
$\wcol_{2k}(F_k)\ge 2^{k+1}-1$.
\item[(ii)] For every $t$, there exists a $t$-thin set of axis-aligned squares in $\mathbb{R}^2$ whose intersection graph $H_{k,t}$
satisfies $\wcol_{2k}(H_{k,t})\ge\binom{k+t}{t}$.
\item[(iii)] For every $d\ge 1$, the graph $H_{k,2^d-1}$ can also be represented as a touching graph of axis-aligned hypercubes
in $\mathbb{R}^{d+2}$.
\end{itemize}
\end{theorem}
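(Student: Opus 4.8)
We establish all three parts by exhibiting explicit constructions, and part~(ii) is the heart of the matter; parts~(i) and~(iii) will be variations on it. The plan is to build, for each $k$ and $t$, a graph $H_{k,t}$ with a distinguished \emph{root} $\rho_{k,t}$ by a recursion mirroring Pascal's identity $\binom{k+t}{t}=\binom{k-1+t}{t}+\binom{k+t-1}{t-1}$, with single-vertex base cases $H_{0,t}$ and $H_{k,0}$. The two ``directions'' of the recursion will play very different metric and geometric roles: a step in the \emph{$k$-direction} glues in a copy of $H_{k-1,t}$ through a subdivided link, costing two units of graph distance; a step in the \emph{$t$-direction} glues in a copy of $H_{k,t-1}$ without increasing distances from the root, at the price of one extra unit of overlap in the planar representation. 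Since a chain of $k$ steps in the $k$-direction reaches graph distance $2k$, this is exactly why the radius parameter in the theorem is $2k$, while $t$ (the number of ``free'' layers) affects only the thinness. To make the construction robust against \emph{all} vertex orderings, each recursive gluing will in fact use a large number $N=N(k,t)$ of copies of the smaller graph rather than a single one, with $N$ chosen large enough to drive the pigeonhole arguments below.

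For the lower bound $\wcol_{2k}(H_{k,t})\ge\binom{k+t}{t}$ I would argue by induction on $k+t$, proving a statement strengthened so as to ``carry the root'': for every linear order $\prec$ of $V(H_{k,t})$ there is a vertex $v$ that weakly $2k$-reaches at least $\binom{k+t}{t}$ vertices \emph{and} such that $\rho_{k,t}$ is weakly $2k$-reachable from $v$ along a path internal to $H_{k,t}$, so that a witness in a subgraph can always be ``continued through the root''. In the inductive step one restricts $\prec$ to each glued copy. For the $k$-direction: among the $N$ glued copies of $H_{k-1,t}$ there is, by pigeonhole, one copy $A$ lying entirely $\prec$-above the two-vertex link joining it to $\rho_{k,t}$; induction applied to $A$ gives a witness $v_A$ weakly $2(k-1)$-reaching $\binom{k-1+t}{t}$ vertices of $A$ and reaching $\rho_A$, hence with two further steps $v_A$ reaches $\rho_{k,t}$; from there the $t$-direction copy of $H_{k,t-1}$ glued at $\rho_{k,t}$ (whose promised set is reachable \emph{from its root}, by the strengthened hypothesis) contributes $\binom{k+t-1}{t-1}$ further vertices, all within the remaining budget since the $t$-gluing adds no distance. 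This yields $\binom{k-1+t}{t}+\binom{k+t-1}{t-1}=\binom{k+t}{t}$. The genuinely delicate point — and the reason for the many copies and the strengthened hypothesis — is handling all orderings uniformly: showing that some copy is always ``above the link'', that the two slack vertices on each link absorb the remaining cases, and that the reachable sets from the $k$-part and the $t$-part are disjoint; I expect this to be the main obstacle. Part~(i) is the same scheme with the $t$-direction replaced by a second $k$-direction branch: $F_k$ is built from a root and two subdivided-link copies of $F_{k-1}$ (plus extra copies for robustness), so that its ``core'' has $1+2(2^{k}-1)=2^{k+1}-1$ vertices and the analogous induction reaches all of them within radius $2k$.

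The geometric realizations are built in parallel with the recursion. For part~(ii) the root $\rho_{k,t}$ is a large axis-aligned square $R$; the subdivided links of the $k$-direction are realized by short chains of small squares touching $R$ from outside (using the two slack vertices to turn a corner), and the $k$-direction copies are placed in pairwise-disjoint regions outside $R$, so they contribute nothing to the thinness; the $t$-direction copy of $H_{k,t-1}$ is placed \emph{inside} a thin collar along the boundary of $R$, so that at every point the set of mutually overlapping squares forms a nested chain whose length increases by exactly one per $t$-step, giving a $t$-thin representation of $H_{k,t}$. For part~(i), comparable axis-aligned boxes in $\mathbb{R}^3$ are used instead: the third coordinate provides the room to attach \emph{both} branches of $F_k$ to the root box while keeping all boxes pairwise comparable (nested up to translation), scaling each successive generation down so that the ordering by size is sizewise.

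Finally, part~(iii) follows by ``de-laminating'' the square representation of $H_{k,2^d-1}$ into $\mathbb{R}^{d+2}$. The point is that, by construction, the at most $2^d-1$ mutually overlapping squares at any point of $\mathbb{R}^2$ are linearly nested; assign the $i$-th square of such a chain ($1\le i\le 2^d-1$) a fixed sign vector $\sigma(i)\in\{-1,+1\}^d\setminus\{(+1,\dots,+1)\}$, translate it by $\varepsilon\,\sigma(i)$ in the $d$ new coordinates (with $\varepsilon$ small relative to the local scale) and extrude it to an axis-aligned hypercube. Distinct chain positions receive distinct orthant directions, so two squares that overlapped in $\mathbb{R}^2$ now meet only along a shared lower-dimensional corner region — they touch rather than overlap — while squares that were disjoint stay disjoint and squares that merely touched stay touching; hence the hypercube family represents the same graph $H_{k,2^d-1}$ and is $1$-thin, i.e.\ a touching representation in $\mathbb{R}^{d+2}$. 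The secondary obstacle here is to ensure that $\sigma$ can be chosen globally consistently, which is precisely where the nested (chain) structure of the overlaps — rather than merely a bound on clique number — is needed.
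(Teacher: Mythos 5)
Your reduction of the problem to a fixed-ordering construction plus a ``robustness'' step is in the right spirit, but the robustness step --- which you yourself flag as the main obstacle --- is exactly where the proposal has a genuine gap, and the specific mechanism you propose does not work. The pigeonhole claim that among the $N$ glued copies of $H_{k-1,t}$ some copy lies entirely $\prec$-above its two-vertex link fails: the link vertices are distinct for each copy, so an adversarial ordering can simply place every link vertex above all vertices of its own copy (e.g.\ put all link vertices at the top of the ordering), and then no copy is above its link, no matter how large $N$ is. Increasing the number of copies only helps if the ``bad'' event for different copies shares a bounded resource, which your per-copy links do not. There is also a mismatch in the strengthened induction hypothesis: you guarantee a witness $v$ that weakly reaches the root, but the step then needs the \emph{root} of the $t$-direction copy to weakly reach that copy's promised set (with the right internal-vertex conditions and within the residual distance budget), which is a different and unproven statement. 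The paper sidesteps all of this with a different construction: it first builds a small graph $H'$ together with one fixed (sizewise) ordering in which a decreasing spanning tree of depth $k$ makes \emph{every} vertex weakly $k$-reachable from the maximum vertex ($F'_k$ with $2^{k+1}-1$ vertices, respectively $H'_{k,t}$ with $\binom{k+t}{t}$ vertices, as interval/rectangle representations), and then applies a complete $m$-ary tree blowup $\AA_{H',\prec,m}$ whose clean dichotomy handles \emph{all} orderings at once: either some scaffolding vertex has all $m$ children below it (and then it weakly reaches a vertex in each of the $m$ subtrees, using the diameter bound), or there is an increasing root-to-leaf path, which induces a copy of $H'$ carrying the ordering $\prec$. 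The geometric side is then a product construction exploiting an $m$-shrinking size sequence, raising the dimension by one (intervals to squares, rectangles to boxes in $\mathbb{R}^3$), rather than your collar/corner-turning placements, which are not worked out.

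Your part (iii) mechanism is also not sound as described: translating overlapping squares by $\varepsilon\,\sigma(i)$ in the new coordinates and extruding them to hypercubes does not make their interiors disjoint --- two cubes shifted by $\pm\varepsilon$ in a new coordinate still overlap on most of their extent in that coordinate. What is needed (and what the paper does) is to extrude one class \emph{upward} and the other \emph{downward} in each new coordinate, so that they meet only in a hyperplane; iterating this over $d$ new coordinates according to the bits of a proper $2^d$-coloring of $H_{k,2^d-1}$ (which exists because $H'_{k,2^d-1}$ is an interval graph of clique number at most $2^d$, and the coloring lifts through the blowup) yields the touching representation in $\mathbb{R}^{d+2}$. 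Your idea of using nesting depth in place of a coloring could perhaps be made to work, but the global consistency issue you raise is real and is precisely what the coloring argument eliminates.
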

That is:
\begin{itemize}
\item[(i)] The (rather natural) class of touching graphs of comparable axis-aligned boxes in $\mathbb{R}^3$
has polynomial strong coloring numbers but exponential weak coloring ones.  Let us remark that touching graphs
of rectangles in $\mathbb{R}^2$ are obtained from planar graphs by adding crossing edges into faces of size four
(when four of the boxes share corners), and such graphs have polynomial weak coloring numbers (this follows e.g.
from their product structure~\cite{dujmovic2019graph}).
\item[(ii)] Unlike in Lemma~\ref{lemma-strong}, in dimension at least two the dependence of the weak coloring numbers in Theorem~\ref{thm-weak}
on the thinness $t$ must indeed be in the exponent, and not just in the multiplicative constant.
Let us also remark that $t$-thin intersection graphs of intervals in $\mathbb{R}$ are interval graphs of clique number at most $2t$.
As was pointed to us by Gwena\"el Joret, any interval graph of clique number $\omega$ satisfies
$\wcol_k(G)\le \binom{\omega+1}{2}(k+1)$, as shown by an ordering obtained by placing first the vertices of a maximal system of pairwise disjoint cliques
of size $\omega$ and then recursively processing the remainder of the graph which has clique number smaller than $\omega$.
\item[(iii)] In the case (a) of Theorem~\ref{thm-weak}, and in particular for the touching graphs of axis-aligned hypercubes, the exponent must
be exponential in the dimension, in a contrast to the case of touching graphs of balls.
\end{itemize}

\section{Upper bounds}

In order to prove Theorem~\ref{thm-weak} for all the classes at once, let us formulate an
abstract graph property $P(f,a,e)$ on which the proof is based.
For a graph $G$, a function $r\colon V(G)\to \mathbb{R}^+$ and $u,v\in V(G)$, let us define
$\lambda_r(u,v)$ as the minimum of $\sum_{x\in V(Q)\setminus \{u,v\}} r(x)$ over all paths $Q$ from $u$ to $v$ in $G$.
For a function $f\colon \mathbb{Z}_0^+\to\mathbb{Z}^+$ and positive integers $a$ and $e$, we say that $(G,r)$ has \emph{the property $P(f,a,e)$} if
\begin{itemize}
\item[(i)] for each $v\in V(G)$ and integers $s\ge 1$ and $p\ge 0$,
there are at most $f(p)$ vertices $u\in V(G)$ such that $r(u)\ge sr(v)$ and $\lambda_r(u,v)\le psr(v)$, and
\item[(ii)] for each $v\in V(G)$ and each positive integer $s$, every sequence $u_1$, $u_2$, \ldots\ of distinct vertices of $G$ such that
$\lambda_r(u_i,v)\le sr(v)$ and $r(u_i)\ge a^isr(v)$ for each $i$ has length at most $e$.
\end{itemize}
Let us remark that $P(f,a,e)$ implies $P(f,a',e)$ for every $a'\ge a$, and (i) implies (ii) with $a=1$ and $e=f(1)$.
The following lemma is proved similarly to Lemma~\ref{lemma-strong}.

\begin{lemma}\label{lemma-prop}
Let $d$ and $t$ be positive integers.
Let $S$ be a $t$-thin finite set of compact convex objects in $\mathbb{R}^d$ and let $G$ be the intersection graph of $S$.
For $v\in V(G)$, let $r(v)=\diam(v)$.
\begin{itemize}
\item[(a)] If $S$ consists of scaled and translated copies of the same centrally symmetric object,
then $(G,r)$ has the property $P(p\mapsto t(2p+3)^d,1,t5^d)$.
\item[(b)] If $S$ consists of $b$-ball-like objects for $b\ge 1$, then $(G,r)$ has the property $P(p\mapsto tb(2p+4)^d,1,tb6^d)$.
\item[(c)] If $S$ consists of balls, then there exists $a$ such that $(G,r)$ has the property $P(p\mapsto t(2p+3)^d,a,2t)$.
\end{itemize}
\end{lemma}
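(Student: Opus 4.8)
The plan is to prove all three cases by adapting the shrink‑and‑pack volume argument behind Lemma~\ref{lemma-strong}. Two reductions are free. First, clause~(i) of case~(c) is literally clause~(i) of case~(a), since balls are scaled translated copies of the centrally symmetric unit ball. Second, clause~(ii) in cases~(a) and~(b) follows from clause~(i) with $p=1$ (the remark preceding the lemma): a sequence as in~(ii) with $a=1$ has $\lambda_r(u_i,v)\le sr(v)$ and $r(u_i)\ge sr(v)$, so all its terms are counted by clause~(i) for $p=1$, of which there are at most $f(1)=t5^d$ in~(a) and $tb6^d$ in~(b). So the substance is clause~(i) for~(a) and~(b), and — the main obstacle — clause~(ii) for~(c).

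For clause~(i), fix $v$ with $n:=r(v)=\diam(v)$, integers $p\ge0$ and $s\ge1$, and let $U=\{u:\ r(u)\ge sn,\ \lambda_r(u,v)\le psn\}$. For $u\in U$ take a path $v=y_0,\dots,y_{m+1}=u$ with $\sum_{i=1}^{m}\diam(y_i)\le psn$. In case~(a), let $W$ be $v$ recentred at the origin, so every object equals $(\text{centre})+\alpha W$ with $\alpha=\diam(\cdot)/n$; the telescoping estimate along the path, run with Minkowski dilates of $W$ instead of balls (each step enlarging by $2\alpha_{y_i}W$), gives $y_m\subseteq c_0+(1+2ps)W$ with $c_0$ the centre of $v$. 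Shrinking $u$ towards a point $z\in u\cap(c_0+(1+2ps)W)$ by the factor $s/\alpha_u\le1$ yields $u'\subseteq u$ with $u'=c'+sW$, so $\vol(u')=s^d\vol(v)$, and $u'\subseteq c_0+(1+2s(p+1))W$; since $\sum_{u\in U}\vol(u')\le t\vol\!\big(c_0+(1+2s(p+1))W\big)$ (as $\mathrm{int}(u')\subseteq\mathrm{int}(u)$ and $S$ is $t$‑thin), we get $|U|\le t(\tfrac1s+2(p+1))^d\le t(2p+3)^d$. Case~(b) is identical with Euclidean balls: the chain gives $y_m\subseteq B(p_0,(ps+1)n)$ for $p_0\in v$, shrinking $u$ towards $z\in u\cap B(p_0,(ps+1)n)$ to diameter $sn$ gives $u'\subseteq B(p_0,(ps+s+1)n)$ with $\vol(u')\ge\vol(B(sn/2))/b$ by $b$‑ball‑likeness, and the volume comparison gives $|U|\le tb(2p+4)^d$. (Note the path's internal objects may now be large, unlike in Lemma~\ref{lemma-strong}; this is harmless, since in~(a) they remain dilates of $W$ and in~(b) only the triangle inequality along the path is used.)

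For clause~(ii) of~(c), let $u_1,\dots,u_\ell$ be distinct balls with $\lambda_r(u_i,v)\le sr(v)$ and $r(u_i)\ge a^isr(v)$. Re‑indexing by increasing radius only strengthens the hypotheses, so assume $R_1\le\dots\le R_\ell$ with $R_i\ge a^isn/2$; with $\rho:=(s+1)n$ (so $sn\ge\rho/2$), every $R_i\ge a^i\rho/4$ is huge compared to $\rho$. Since $\lambda_r(u_i,v)\le sr(v)$ forces $\mathrm{dist}(u_i,v)\le sn$, each $u_i=B(c_i,R_i)$ has $d_i:=|c_i-p_0|\le R_i+\rho$ for the centre $p_0$ of $v$. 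Now take a uniformly random unit vector $\hat x$ and examine the point $p_0+M\hat x$ with $M:=\sqrt{2\rho R_1}$: an exact computation gives $p_0+M\hat x\in u_i$ iff $\hat x\cdot\frac{c_i-p_0}{d_i}\ge\theta_i$ where $\theta_i=\frac{M^2+d_i^2-R_i^2}{2Md_i}$, and, separating off the ``deep'' balls ($d_i\le R_i-2\rho$, where $\theta_i<0$), one checks $\theta_i=O(\sqrt{\rho/R_1})=O(a^{-1/2})$ for all the rest. Hence each $u_i$ contains $p_0+M\hat x$ with probability at least $\tfrac12-g_d(O(a^{-1/2}))$, where $g_d(\theta)=O(\sqrt d\,\theta)$ bounds the measure of the spherical band $\{\hat x\cdot e\in[0,\theta]\}$. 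Because a random point of $\mathbb{R}^d$ lies in the interior of at most $t$ of the $u_i$ and (for generic $M$) in no $\partial u_i$ almost surely, linearity of expectation gives $\ell\big(\tfrac12-g_d(O(a^{-1/2}))\big)\le t$; choosing $a$ large in terms of $d$ and $t$ makes the bracket exceed $t/(2t+1)$, so $\ell\le 2t$.

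The crux is this last argument: one needs the huge balls near $v$ to behave, as seen from $p_0$, like half‑spaces whose bounding hyperplanes lie within $O(\rho)$ of $p_0$, and to do so \emph{uniformly} over balls of wildly different radii — which is why $M$ is balanced against the smallest radius $R_1$ and the deep balls are peeled off first. With the thresholds $\theta_i$ uniformly tiny, it is the antipodal averaging (not a crude volume bound, which would only give an exponent linear in $d$) that upgrades ``$t$‑thin'' to the sharp constant $2$ in $2t$.
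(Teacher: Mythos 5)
Your proposal is correct. Clause (i) in all three cases, and the reduction of clause (ii) in cases (a) and (b) to clause (i) with $p=1$ (giving $e=f(1)=t5^d$ resp.\ $tb6^d$), follow the paper's proof essentially verbatim: the same chain-of-intersecting-objects argument shows $u$ meets the dilate $B_{ps}(v)$, the same shrinking of $u$ to diameter $s\diam(v)$, and the same $t$-thin volume packing inside $B_{(p+1)s}(v)$ yield the bounds $t(2p+3)^d$ and $tb(2p+4)^d$; your observation that the internal objects of the path may be large but only their total diameter matters is exactly how the paper handles it.

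Where you genuinely diverge is clause (ii) of case (c). The paper fixes a reference ball $B_{ls}(v)$ with $l=l(d,t)$ chosen so that a half-space tangent to $B_s(v)$ covers a $\bigl(\tfrac12-\tfrac1{6t}\bigr)$-fraction of it, approximates that half-space by a tangent ball $C_a$ of radius $as\rad(v)$ for large $a$, and then invokes a monotonicity claim (any ball of radius at least $\rad(C_a)$ meeting $B_s(v)$ covers at least as large a fraction of $B_{ls}(v)$ as $C_a$ does) before applying $t$-thinness to the volume fractions; the constants arise from a limit argument and the reference region is independent of the particular sequence. You instead evaluate a single random point $p_0+M\hat x$ on a sphere whose radius $M=\sqrt{2\rho R_1}$ is the geometric mean of the local scale $\rho$ and the smallest huge radius $R_1$, compute the cap thresholds $\theta_i$ explicitly, peel off the deep balls (where $\theta_i\le 0$), show $\theta_i=O(\sqrt{\rho/R_1})=O(a^{-1/2})$ uniformly for the rest, and finish by linearity of expectation against the pointwise bound $t$. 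Both arguments implement the same idea -- huge balls reaching $v$ look like half-spaces at the chosen scale, so each covers almost half of a reference measure concentrated near $v$, and thinness caps the count at $2t$ -- and both produce $a=a(d,t)$. Your version is more explicit and avoids the paper's limiting/monotonicity step (at the cost of the law-of-cosines computation, the $O(\sqrt d\,\theta)$ band estimate, and a scale $M$ that depends on the sequence through $R_1$), while the paper's is softer and sequence-independent. Two trivial points to tidy: the degenerate case $d_i=0$ (handled since $M<R_1\le R_i$, so such a ball always contains the point), and the "generic $M$" hedge is unnecessary because $M<R_i$ already rules out the sphere $\partial u_i$ containing the whole sphere of radius $M$ around $p_0$.
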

\begin{proof}
Consider a vertex $v\in V(G)$ and integers $s\ge 1$ and $p\ge 0$.  For any $m\ge 0$, in cases (a) and (c) let $B_m(v)$ be
the object obtained by scaling $v$ by the factor of $2m+1$, with the center of $v$ being the fixed point.
In case (b), let $B_m(v)$ be a ball of radius $(m+1)\diam(v)$ centered at an arbitrarily chosen point of $v$.
Let $U$ be the set of vertices $u\in V(G)$ such that $r(u)\ge sr(v)$ and $\lambda_r(u,v)\le psr(v)$.
Observe that for any $u\in U$, we have $u\cap B_{ps}(v)\neq\emptyset$.
Let $u'$ be a scaled translation of $u$ such that $u'\subseteq u$, $u'\cap B_{ps}(v)\neq\emptyset$, and $\diam(u')=s\diam(v)$.
For each $m\ge 0$, in cases (a) and (c), we have
$$\vol(u')=s^d\vol(v)=\bigl(\tfrac{s}{2m+1}\bigr)^d\vol(B_m(v)),$$
and in case (b) we have
$$\vol(u')\ge b^{-1}s^d\vol(B(\diam(v)/2))=b^{-1}\bigl(\tfrac{s}{2m+2}\bigr)^d\vol(B_m(v)).$$
In either case, we have $u'\subseteq B_{(p+1)s}(v)$, and since $S$ is $t$-thin, it follows that
$$|U|\le t\bigl(\tfrac{2(p+1)s+1}{s}\bigr)^d\le t(2p+3)^d$$
in cases (a) and (c), and
$$|U|\le tb\bigl(\tfrac{2(p+1)s+2}{s}\bigr)^d\le tb(2p+4)^d$$
in case (b).  Hence, the part (i) of the property $P(f,a,e)$ is verified,
and by the observations made before the lemma, this finishes the proof for the cases (a) and (b).

Let us now consider the part (ii) in case (c).  Let $Q$ be a half-space whose boundary hyperplane touches $B_s(v)$
and is otherwise disjoint from $B_s(v)$.
There exists $l$ such that $\vol(Q\cap B_{ls}(v))\ge \bigl(\tfrac{1}{2}-\tfrac{1}{6t}\bigr)\vol(B_{ls}(v))$; let us fix smallest such $l$.
For $a\ge 1$, let
$C_a$ be a ball touching $B_s(v)$ of radius $as\rad(v)$. I.e. $C_a \subseteq Q$.  Note that
$$\lim_{a\to\infty} \frac{\vol(C_a\cap B_{ls}(v))}{\vol(B_{ls}(v))}=\frac{\vol(Q\cap B_{ls}(v))}{\vol(B_{ls}(v))},$$
and thus there exists $a$ such that $\vol(C_a\cap B_{ls}(v))\ge \bigl(\tfrac{1}{2}-\tfrac{1}{7t}\bigr)\vol(B_{ls}(v))$;
let us fix smallest such $a$.

Consider a sequence $u_1$, $u_2$, \ldots, $u_n$ of distinct vertices of $G$ such that $\lambda_r(u_i,v)\le sr(v)$ and $r(u_i)\ge a^isr(v)$ for each $i$.
In particular, note that $\rad(u_i)\ge \rad(C_a)$ for each $i$.  From the observation made in the first paragraph of the proof,
we have $u_i\cap B_s(v)\neq\emptyset$, and it follows that
$$\frac{\vol(u_i\cap B_{ls}(v))}{\vol(B_{ls}(v))}\ge \frac{\vol(C_a\cap B_{ls}(v))}{\vol(B_{ls}(v))}\ge \tfrac{1}{2}-\tfrac{1}{7t}.$$
Since $S$ is $t$-thin and $n$ is an integer, this implies $n\le 2t$, verifying the part (ii) of the property
$P(p\mapsto t(2p+3)^d,a,2t)$.
\end{proof}

To bound the weak coloring numbers, we need the following result about graphs of bounded pathwidth
which appears in a stronger form (for treewidth) in van den Heuvel et al.~\cite{van2017generalised}.  For us, it is convenient to
state the result as follows (without explicitly defining pathwidth), and thus we include the proof for completeness.
A path $P=v_1v_2\ldots v_m$ in a graph $G$ with a linear ordering $\prec$ of vertices is \emph{decreasing}
if $v_1\succ v_2\succ\cdots\succ v_m$.  For each $v\in V(G)$, we define $\decr_{G,\prec,k}(v)$ as the
set of vertices reachable from $v$ by decreasing paths of length at most $k$.

\begin{lemma}\label{lemma-pw}
Let $k$ and $w$ be non-negative integers.  Let $\prec$ be a linear ordering of the vertices of a graph $G$.
If for every $x\in V(G)$, at most $w$ vertices $y\prec x$ have a neighbor $y'\succeq x$, then
$|\decr_{G,\prec,k}(v)|\le \binom{k+w}{w}$ for every $v\in V(G)$.
\end{lemma}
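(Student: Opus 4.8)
The plan is to first recast the hypothesis as a path decomposition and then induct on $k$.

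\emph{Step 1 (path decomposition).} For each vertex $x$ put $B_x=\{x\}\cup\{y\prec x:y\text{ has a neighbour}\succeq x\}$. The hypothesis says $|B_x|\le w+1$, and clearly $x=\max_\prec B_x$. Using transitivity of $\prec$ one checks that, ordered by $\prec$, the sets $B_x$ form a path decomposition of $G$; equivalently $G$ is a subgraph of an interval graph in which vertex $x$ receives an interval with left endpoint $x$ and no point of the line lies in more than $w+1$ intervals. The interval-graph closure $\widehat G$ (same vertices, all edges inside common bags added) has exactly the same bags, hence satisfies the same hypothesis, and $\decr_{G,\prec,k}(v)\subseteq\decr_{\widehat G,\prec,k}(v)$; so I may assume $G=\widehat G$, in which case the in-neighbours of a vertex $x$ are precisely the vertices $\ne x$ whose interval covers $x$, i.e. $N^-(x)=B_x\setminus\{x\}$.

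\emph{Step 2 (decreasing paths track the bags).} If $v=v_1\succ v_2\succ\cdots\succ v_m$ is a decreasing path then $v_{i+1}\in B_{v_i}$ for every $i$, since $v_{i+1}\prec v_i$ and $v_{i+1}$ has the neighbour $v_i\succeq v_i$. In particular all possible ``second vertices'' of decreasing paths out of $v$ lie in the single bag $B_v$; writing $N^-(v)=\{z_1\succ\cdots\succ z_p\}$ (so $p\le w$), the $z_j$ are pairwise adjacent and $z_j\in N^-(z_i)$ whenever $i<j$.

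\emph{Step 3 (induction on $k$).} The base $k=0$ is immediate ($\binom{w}{w}=1$). For the inductive step, assign to each $u\in\decr_{G,\prec,k}(v)\setminus\{v\}$ the $\prec$-largest vertex $z_{j(u)}\in N^-(v)$ that can serve as the second vertex of a length-$\le k$ decreasing path from $v$ to $u$. Then $u\in\decr_{G,\prec,k-1}(z_{j(u)})$ but $u\notin\decr_{G,\prec,k-1}(z_i)$ for $i<j(u)$; since $z_{j(u)}\in N^-(z_i)$ for $i<j(u)$, this forces $u$ to be at distance exactly $k-1$ from $z_{j(u)}$ whenever $j(u)\ge 2$. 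Hence
\[
\decr_{G,\prec,k}(v)\ \subseteq\ \{v\}\ \cup\ \decr_{G,\prec,k-1}(z_1)\ \cup\ \bigcup_{j\ge 2}\bigl(\decr_{G,\prec,k-1}(z_j)\setminus\decr_{G,\prec,k-2}(z_j)\bigr).
\]
The first two terms contribute at most $1+\binom{k-1+w}{w}$, so by Pascal's identity $\binom{k+w}{w}=\binom{k-1+w}{w}+\binom{k-1+w}{w-1}$ it remains to bound the last union by $\binom{k-1+w}{w-1}-1$. Since $\{z_2,\dots,z_p\}$ is again a clique of size $\le w-1$ sitting in one bag, I would handle this via a strengthened statement bounding the union, over a clique of starting vertices, of the ``distance exactly $k'$'' sets, proved by a nested induction on the clique size and on $w$; there the hockey-stick identity $\binom{k-1+w}{w-1}=\sum_{j=0}^{w-1}\binom{k-1+j}{j}$ organizes the telescoping of the heavily overlapping reachable sets.

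\emph{Main obstacle.} The one real difficulty is this last bound. A union bound over the $\le w$ sets $\decr_{G,\prec,k-1}(z_j)$ is hopeless — each is already of size comparable to the target $\binom{k+w}{w}$ — so everything hinges on quantifying their mutual overlap, which is forced by $z_1,\dots,z_p$ all lying in the one bag $B_v$ (descending through $B_v$ ``uses up'' a slot of the width budget). Setting up the multi-parameter induction (in $k$, in $w$, and in the size of the clique of start vertices) so that all pieces line up against the binomial and hockey-stick identities is where essentially all the work lies; the reduction to the interval-graph closure, where ``share a bag'' becomes ``adjacent,'' is what keeps this bookkeeping tractable.
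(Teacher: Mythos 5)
There is a genuine gap: your proof stops exactly where the real content of the lemma begins. Steps 1--3 are fine (the closure to the interval completion is legitimate and is in fact the same normalization the paper uses, and the decomposition $\decr_{G,\prec,k}(v)\subseteq\{v\}\cup\decr_{G,\prec,k-1}(z_1)\cup\bigcup_{j\ge 2}\bigl(\decr_{G,\prec,k-1}(z_j)\setminus\decr_{G,\prec,k-2}(z_j)\bigr)$ is correct), but the bound $\bigl|\bigcup_{j\ge 2}\bigl(\decr_{G,\prec,k-1}(z_j)\setminus\decr_{G,\prec,k-2}(z_j)\bigr)\bigr|\le\binom{k-1+w}{w-1}-1$ is only announced, not proved: the ``strengthened statement bounding the union over a clique of starting vertices'' is never formulated precisely, and the nested induction on clique size and $w$ that is supposed to prove it is not carried out. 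As you yourself note, this is where essentially all the work lies, so what you have is a reduction of the lemma to an unproved (and not obviously easier) claim about heavily overlapping reachability sets, rather than a proof.

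For comparison, the paper's proof avoids the overlapping-union problem entirely by choosing the split differently. After the same completion step, it takes $z$ to be the $\prec$-smallest neighbor of $v$ and inducts on $k+w$: vertices of $\decr_{G,\prec,k}(v)$ that are $\succ z$ are reached inside the induced subgraph $G'$ on the vertices strictly between $z$ and $v$, where the width parameter drops to $w-1$ because $z$ is adjacent to all of $G'$, giving $\binom{k+w-1}{w-1}$; vertices $\preceq z$ are shown to lie in $\decr_{G,\prec,k-1}(z)$ by rerouting the decreasing path through $z$ (using the completion edge $u'z$ at the edge of the path that crosses $z$, and the observation that the crossing edge cannot be incident to $v$ by minimality of $z$), giving $\binom{k+w-1}{w}$. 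These two pieces are disjoint by construction, so Pascal's identity finishes with no inclusion--exclusion bookkeeping. If you want to salvage your route, you would need to state and prove your clique-union lemma; otherwise, replacing the branching over all of $N^-(v)$ by the single split at the smallest neighbor is the missing idea.
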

\begin{proof}
Without loss of generality, we assume that if $yy'\in E(G)$ and $y\prec y'$, then $y$ is also adjacent to
all vertices $x$ such that $y\prec x\prec y'$.  Indeed, adding such an edge $yx$ does not violate the assumptions
and can only increase $|\decr_{G,\prec,k}(v)|$.

The proof is by induction on $k+w$.  Note that $|\decr_{G,\prec,0}(v)|=1$, and thus we can assume $k\ge 1$.
If no neighbor of $v$ is smaller than $v$, then $|\decr_{G,\prec,k}(v)|=1$,
and thus the claim of the lemma holds.  Hence, we can assume $v$ has such a neighbor, and in particular $w\ge 1$.
Let $z$ be the smallest neighbor of $v$.  Let $G'$ be the subgraph of $G$ induced by the vertices greater than $z$
and smaller or equal to $v$.  Since $z$ is adjacent to all the vertices of $G'$, note that for each $x\in V(G')$,
at most $w-1$ vertices $y\prec x$ of $G'$ have a neighbor $y'\succeq x$ in $G'$.

Consider now a vertex $u\in\decr_{G,\prec,k}(v)$, and let $Q$ be a decreasing path of length at most $k$ from $v$ to $u$.
If $z\prec u$, then $Q$ is also a decreasing path in $G'$, and thus $u\in \decr_{G',\prec,k}(v)$.  Note that
$|\decr_{G',\prec,k}(v)|\le \binom{k+w-1}{w-1}$ by the induction hypothesis.
If $u\prec z$, consider the edge $u'z'$ of $Q$ such that $u'\prec z$ and $z\preceq z'$. Note that $u'$ is not adjacent
to $v$ by the minimality of $z$, and thus $z'\neq v$.  Moreover, by the assumption made in the first paragraph,
$u'z\in E(G)$.  Hence, $u$ is reachable from $v$ by the decreasing path of length at most $k$ starting with $vzu'$
and continuing along $Q$, and thus $u\in \decr_{G,\prec,k-1}(z)$.
If $u=z$, then we also have $u\in \decr_{G,\prec,k-1}(z)$.
By the induction hypothesis, we have $|\decr_{G,\prec,k-1}(z)|\le \binom{k+w-1}{w}$.

Therefore,
\begin{align*}
|\decr_{G,\prec,k}(v)|&=|\decr_{G',\prec,k}(v)|+|\decr_{G,\prec,k-1}(z)|\\
&\le\binom{k+w-1}{w-1}+\binom{k+w-1}{w}=\binom{k+w}{w}.
\end{align*}
\end{proof}

We use the following corollary, obtained by applying Lemma~\ref{lemma-pw} to the graph obtained by contracting each interval to a single vertex.

\begin{corollary}\label{cor-copw}
Let $w$, $k$, and $m$ be non-negative integers.
Let $\prec$ be a linear ordering of vertices of a graph $H$, and let $\II=\{I_i\colon i=0,1,\ldots\}$ be a partition of $V(H)$ into consecutive
intervals in this ordering, where for every $i<j$, $u\in L_i$, and $v\in L_j$, we have $u\succ v$ (note the reverse ordering of the indices).
Suppose that for each $i\ge 0$, we have $|L_i|\le m$ and there are at most $w$ indices $j>i$ such that
a vertex of $L_j$ has a neighbor in $L_0\cup L_1\cup \cdots\cup L_i$.  Then $|\decr_{H,\prec,k}(v)|\le m\binom{k+w}{w}$ for each $v\in V(H)$.
\end{corollary}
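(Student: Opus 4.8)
The plan is to reduce Corollary~\ref{cor-copw} to Lemma~\ref{lemma-pw} by contracting each interval. Define the \emph{contracted graph} $G$ with vertex set $\{\ell_i\colon i\ge 0\}$, one vertex per interval $L_i$, putting $\ell_i\ell_j\in E(G)$ whenever $i\neq j$ and some vertex of $L_i$ is adjacent in $H$ to some vertex of $L_j$, and order its vertices by declaring $\ell_i\succ\ell_j$ exactly when $i<j$ (so $\ell_0$ is the largest, mirroring the reversed indexing of the intervals). First I would check that $(G,\prec)$ satisfies the hypothesis of Lemma~\ref{lemma-pw} with the same $w$: for a fixed $x=\ell_i$, the vertices $y\prec x$ are precisely the $\ell_j$ with $j>i$, and such a $y=\ell_j$ has a neighbor $y'\succeq x$ exactly when $\ell_j$ is adjacent to some $\ell_{i'}$ with $i'\le i$, i.e.\ when a vertex of $L_j$ has a neighbor in $L_0\cup L_1\cup\cdots\cup L_i$; by assumption there are at most $w$ such indices $j$. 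Hence Lemma~\ref{lemma-pw} yields $|\decr_{G,\prec,k}(\ell_i)|\le\binom{k+w}{w}$ for every~$i$.

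Next I would lift decreasing paths of $H$ to decreasing paths of $G$ without increasing their length. Let $Q=x_0x_1\cdots x_\ell$ be a decreasing path of $H$ with $\ell\le k$, and let $i_t$ be the index with $x_t\in L_{i_t}$. Since the intervals are consecutive in $\prec$ with reversed indices and $x_0\succ x_1\succ\cdots\succ x_\ell$, the sequence $i_0\le i_1\le\cdots\le i_\ell$ is nondecreasing: runs of equal consecutive indices correspond to sub-paths staying inside a single interval, while each jump $i_t<i_{t+1}$ witnesses an edge $\ell_{i_t}\ell_{i_{t+1}}$ of $G$. Deleting consecutive repetitions from $\ell_{i_0},\ell_{i_1},\ldots,\ell_{i_\ell}$ therefore produces a genuine decreasing path of $G$ (each interval is visited only once, by monotonicity of the indices) of length at most $\ell\le k$ from $\ell_{i_0}$ to $\ell_{i_\ell}$. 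Consequently, if $v\in L_{i_0}$ and $u\in\decr_{H,\prec,k}(v)$, then the interval containing $u$ is one of the at most $\binom{k+w}{w}$ vertices of $\decr_{G,\prec,k}(\ell_{i_0})$; since each interval has at most $m$ vertices, $|\decr_{H,\prec,k}(v)|\le m\binom{k+w}{w}$.

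There is no genuinely hard step here: the only points needing care are keeping the reversed indexing straight (so that ``$\prec$-smaller'' translates to ``larger index'') when verifying the Lemma~\ref{lemma-pw} hypothesis, and confirming that contracting the within-interval runs of a decreasing path really yields a path of $G$, which holds because the interval indices along a decreasing path are monotone. The degenerate cases — $v$ having no $\prec$-smaller neighbor, or $Q$ lying entirely in one interval — are absorbed automatically, since $\decr_{G,\prec,k}(\ell_{i_0})$ always contains $\ell_{i_0}$ itself.
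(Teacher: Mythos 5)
Your proof is correct and follows essentially the same route as the paper, which obtains the corollary precisely by contracting each interval to a single vertex (with the reversed index ordering) and applying Lemma~\ref{lemma-pw}; your verification that the contracted graph satisfies the hypothesis with the same $w$, and that decreasing paths of $H$ project to decreasing paths of the contracted graph of no greater length, fills in exactly the details the paper leaves implicit.
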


Theorem~\ref{thm-weak} now follows from Lemma~\ref{lemma-prop} and the following theorem.
\begin{theorem}
Let $f\colon \mathbb{Z}_0^+\to\mathbb{Z}^+$ be a function and let $a$ and $e$ be positive integers. 
For a graph $G$ and a function $r\colon V(G)\to\mathbb{R}^+$,
let $\prec$ be a linear ordering of $V(G)$ such that if $u\prec v$, then $r(u)\ge r(v)$.
If $(G,r)$ has the property $P(f,a,e)$, then
$$\wcol_{\prec,k}(G)\le \max(1,\lceil \log_2 k\rceil)f(2k-2)\binom{k+e+2}{e+2}$$
for every integer $k\ge a$.
\end{theorem}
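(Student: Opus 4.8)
The plan is to fix a vertex $v\in V(G)$ and bound $|\wreach_{G,\prec,k}(v)|$ by the claimed quantity; the two parts of the property $P(f,a,e)$ will control, respectively, how many weakly $k$-reachable vertices live at each ``weight scale'', and how these scales can stack up along paths, while Corollary~\ref{cor-copw} turns this into the binomial factor.

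\emph{Step 1 (the key inequality).} Let $u\in\wreach_{G,\prec,k}(v)$; then $u\preceq v$, so $r(u)\ge r(v)$, and there is a path $Q$ from $v$ to $u$ of length at most $k$ all of whose at most $k-1$ internal vertices $x$ satisfy $x\succ u$ and hence $r(x)\le r(u)$. Writing $s$ for the largest positive integer with $r(u)\ge sr(v)$, we get $r(u)<(s+1)r(v)\le 2sr(v)$ and therefore
$$\lambda_r(u,v)\le\sum_{x\text{ internal}}r(x)\le (k-1)r(u)< (2k-2)sr(v).$$
Part~(i) of $P(f,a,e)$, applied with this $s$ and $p=2k-2$, then shows that for every fixed scale there are at most $f(2k-2)$ weakly $k$-reachable vertices, i.e. $|\{u\in\wreach_{G,\prec,k}(v)\colon\lfloor r(u)/r(v)\rfloor=s\}|\le f(2k-2)$ for each $s$ (and the same bound survives if one groups scales into dyadic blocks).

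\emph{Step 2 (combining scales through a pathwidth-type bound).} A priori there could be unboundedly many occupied scales, so I would not simply sum over them. Instead I would use the standard ``hub'' decomposition: split a certifying path $Q$ for $u$ at its $\prec$-minimal internal vertex $w$, so that $u$ is \emph{strongly} $k$-reachable from $w$ while $w$ is weakly $k$-reachable from $v$ along a strictly shorter path (and $w=v$ when $Q$ already certifies strong reachability). I would iterate this dyadically in the path length, aiming for $\lceil\log_2 k\rceil$ rounds, so that $\wreach_{G,\prec,k}(v)$ is covered by $\lceil\log_2 k\rceil$ pieces, each captured by decreasing-path reachability in an auxiliary graph $H$ whose edges record these short strong detours. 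Ordering $V(H)$ by $\prec$ and partitioning it into consecutive intervals by scale, Step~1 gives interval size at most $m:=f(2k-2)$; the crucial point is that part~(ii) of $P(f,a,e)$ — where the hypothesis $k\ge a$ is used to make the relevant weight ratios genuinely geometric — bounds the number of higher-scale intervals that send an edge back into a given initial segment by $w:=e+2$ (the ``$+2$'' absorbing $v$'s own interval and one boundary term). Corollary~\ref{cor-copw} then yields $m\binom{k+w}{w}=f(2k-2)\binom{k+e+2}{e+2}$ per round, and summing over the rounds, with $\max(1,\cdot)$ covering $k\le 2$, gives the stated bound.

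\emph{Main obstacle.} Step~1 is routine volume bookkeeping of the same flavour as Lemma~\ref{lemma-strong}. The delicate points, where I expect the real work to lie, are (a) organizing the hub decomposition so that passing from weak reachability in $G$ to decreasing reachability in $H$ costs only the logarithmic factor rather than a factor $k$ — a naive iteration only reproduces the Observation~\ref{obs-weak}-type exponential bound — and (b) verifying that part~(ii) really caps the ``interchange width'' of the scale partition at $e+2$ uniformly in $k$; matching the constants $(2k-2,\ e+2,\ \lceil\log_2 k\rceil)$ exactly is the crux, everything else being counting analogous to the proofs already given.
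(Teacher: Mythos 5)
Your Step~1 is sound and matches the paper's use of part~(i) (with dyadic scales $s=2^{b-1}k^i$ and $p=2(k-1)$), but Step~2 --- the actual combining mechanism --- has a genuine gap, and its bookkeeping cannot work as stated. To invoke part~(ii) you need the vertices you list to have $r$-values growing by factors of at least $a$ above a \emph{single} threshold $sr(v)$, and since $a$ may be a huge constant (case~(c) of Lemma~\ref{lemma-prop}), the only way the hypothesis $k\ge a$ helps is if the consecutive scale-intervals are the levels $L_i=\{x\colon k^ir(v)\le r(x)<k^{i+1}r(v)\}$, i.e.\ spaced by a factor of $k$. But then each interval has size at most $\lceil\log_2 k\rceil f(2k-2)$ (it splits into $\lceil\log_2 k\rceil$ dyadic blocks, each of size $\le f(2k-2)$ by your Step~1), not $f(2k-2)$. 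This is exactly the paper's route: \emph{one} application of Corollary~\ref{cor-copw} with interval size $m=\lceil\log_2 k\rceil f(2k-2)$ and width $w\le e+2$; the logarithmic factor comes from the interval size, not from any iteration. Your plan instead wants interval size $f(2k-2)$ (which forces dyadic spacing, killing part~(ii) when $a>2$) and proposes to recover the $\log$ factor from $\lceil\log_2 k\rceil$ rounds of a ``hub'' decomposition dyadic in path length --- but no such argument is given, and the one concrete step you describe (splitting a witness path at its $\prec$-minimal internal vertex $w$) does not even produce a vertex of $\wreach_{G,\prec,k}(v)$, since internal vertices of a weak-reachability witness need only exceed $u$, not precede $v$; iterating it is precisely Observation~\ref{obs-weak} and yields the exponential bound you are trying to avoid.

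The missing idea is the auxiliary graph that makes Corollary~\ref{cor-copw} applicable in a single shot: take $H$ on the vertex set $\wreach_{G,\prec,k}(v)$, joining $x\prec y$ when some witness path $Q$ of length at most $k$ from $v$ to $x$ passes through $y$ with all internal vertices of the $x$--$y$ stretch of $Q$ above $y$, and weight each edge by the minimum length $\ell$ of that stretch. Then every vertex of $H$ is reached from $v$ by a decreasing path of total $\ell$-weight (hence length) at most $k$, so $V(H)=\decr_{H,\prec,k}(v)$; and an edge of $H$ from $u_m\in L_{j_m}$ into $L_0\cup\cdots\cup L_i$ forces a path in $G$ from $v$ to $u_m$ of length at most $k$ whose internal vertices have $r$-value below $k^{i+1}r(v)$, giving $\lambda_r(v,u_m)\le (k-1)k^{i+1}r(v)\le sr(v)$ with $s=k^{i+2}$, while $j_m\ge i+2+m$ and $k\ge a$ give $r(u_m)\ge a^m sr(v)$; part~(ii) then caps the number of such levels by $e+2$ (two indices being absorbed before the geometric growth kicks in). Without this construction and the power-of-$k$ leveling refined dyadically, your outline does not assemble into a proof.
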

\begin{proof}
Consider any integer $k\ge a$ and a vertex $v\in V(G)$; we are going to bound the number of vertices weakly $k$-reachable from $v$.
Note that for $k=1$, $\wreach_{G,\prec,1}(v)$ consists of the vertices $x\in V(G)$ such that $r(x)\ge r(v)$ and $\lambda_r(v,x)=0$,
and thus $|\wreach_{G,\prec,1}|\le f(0)$ by the part (i) of the property $P(f,a,e)$ with $s=1$ and $p=0$.
Hence, we can assume that $k\ge 2$.

Let $H$ be the graph with the vertex set $\wreach_{G,\prec,k}(v)$, such that for $x,y\in V(H)$ with $x\prec y$, we have
$xy\in E(H)$ if and only if there exists a path $Q$ of length at most $k$ in $G$ from $v$ to $x$ such that $y\in V(Q)$
and all the internal vertices of the subpath of $Q$ between $x$ and $y$ are greater than $y$.  Let $\ell(xy)$
denote the minimum length of the subpath between $x$ and $y$ over all paths $Q$ satisfying these conditions.
Observe that for every edge $e'$ of $H$, there exists a decreasing path $D$ from $v$ containing the edge $e'$
such that $\sum_{e\in E(D)} \ell(e)\le k$.  Moreover, $V(H)=\decr_{H,\prec,k}(v)$.

For $i\ge 0$, let $L_i$ consist of the vertices $x\in V(H)$ such that $k^ir(v)\le r(x)<k^{i+1}r(v)$;
in particular, $v\in L_0$.
Let $c=\lceil \log_2 k\rceil$ and further partition $L_i$ into $L_{i,1},\ldots,L_{i,c}$, where $L_{i,b}$ consists of the vertices $x \in L_i$ with $2^{b-1}k^ir(v) \leq r(x) < 2^bk^ir(v)$ for $b=1,\ldots,c$.
Consider any vertex $x\in L_{i,b}$.  Since $x$ is weakly $k$-reachable from $v$ and
$r(x)<2^bk^ir(v)$, we have $\lambda_r(v,x)<(k-1)2^bk^ir(v)$.  Moreover, $r(x)\ge 2^{b-1}k^ir(v)$,
and thus by the part (i) of the property $P(f,a,e)$ with $s=2^{b-1}k^i$ and $p=2(k-1)$,
we conclude $|L_{i,b}|\le f(2k-2)$ for each $b \in \{1,\ldots,c\}$.
Hence, we have $|L_i| = |L_{i,1}|+\cdots+|L_{i,c}| \leq cf(2k-2) = \lceil \log_2 k\rceil f(2k-2)$.

Let $j_{-1}<j_0<j_1< \cdots <j_{w-2}$ be all indices such that $j_{-1}>i$ and for each $m\in\{-1,\ldots, w-2\}$, a vertex $u_m\in L_{j_m}$ has 
a neighbor $y_m\in L_0\cup\cdots\cup L_i$ for each $m$.  For $m=1,\ldots,w-2$,
since there exists a decreasing path $D$ from $v$ containing the edge $u_my_m$ such that
$\sum_{e\in E(D)} \ell(e)\le k$, there exists a path $Q$ in $G$ from $v$ to $u_m$ of length at most $k$ such
that $r(x)\le r(y_m)<k^{i+1}r(v)$ for every internal vertex $x$ of $Q$.  Consequently,
we have $\lambda_r(v,u_m)\le (k-1)k^{i+1}r(v)\le sr(v)$ for $s=k^{i+2}$.
Moreover, note that $j_m\ge i+2+m$, and thus $r(u_m)\ge k^{i+2+m}r(v)\ge a^msr(v)$.  By part (ii) of the
property $P(f,a,e)$, we conclude that $w\le e+2$.

Hence, Corollary~\ref{cor-copw} implies that
$$|\wreach_{G,\prec,k}(v)|=|\decr_{H,\prec,k}(v)|\le \lceil \log_2 k\rceil f(2k-2)\binom{k+e+2}{e+2}$$
for each $v\in V(G)$.
\end{proof}

\section{Lower bounds}

\begin{figure}
\begin{center}\includegraphics[width=0.6\textwidth]{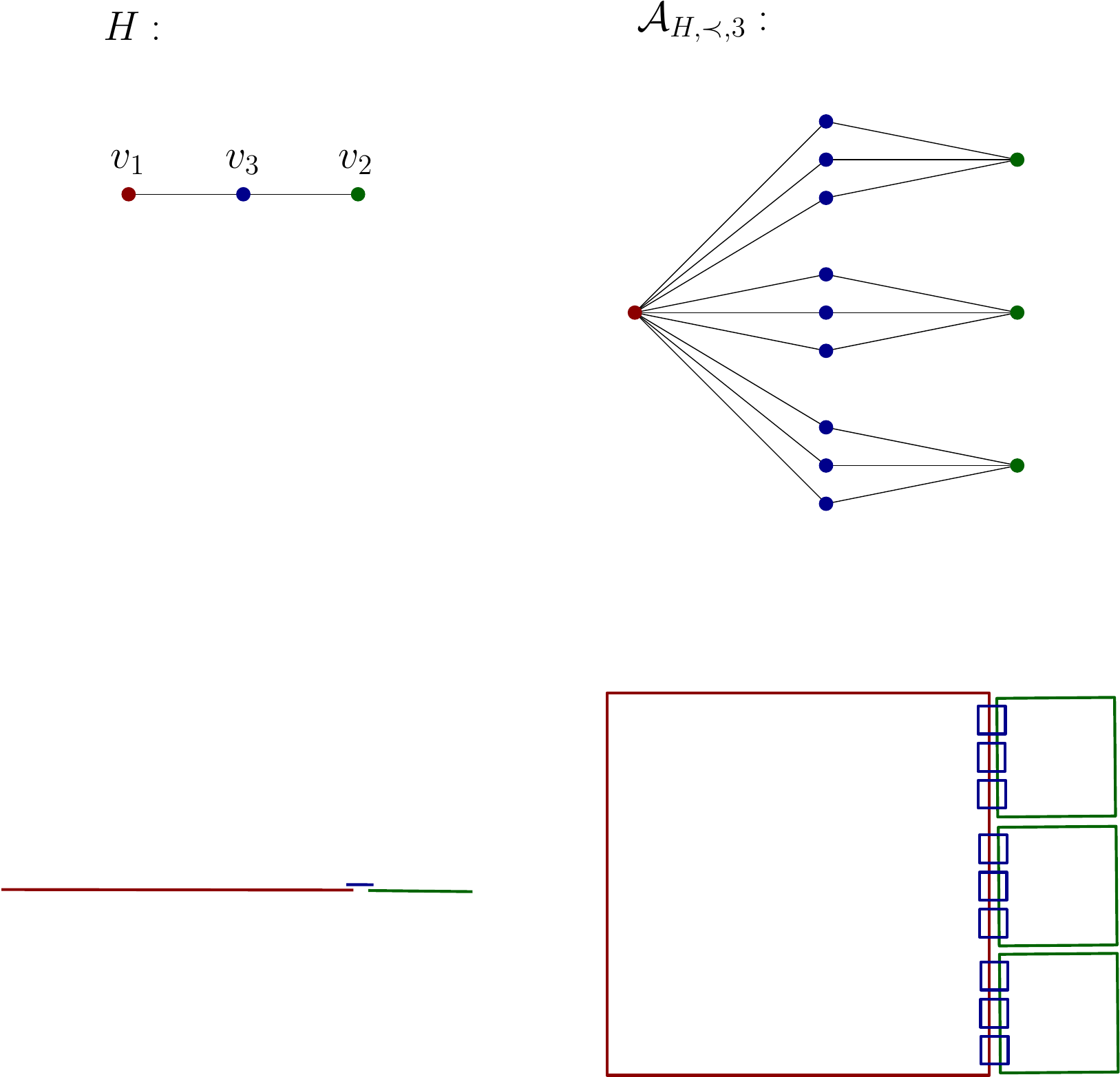}\end{center}
\caption{The graph $\AA_{H,\prec,3}$ and its intersection representation by squares.}\label{fig-constr}
\end{figure}

It is relatively easy to construct intersection graphs with large weak coloring numbers with respect to a fixed coloring.
The following construction (illustrated in the top part of Figure~\ref{fig-constr}) enables us to turn such graphs into graphs that have large weak coloring numbers
with respect to every ordering.  Let $H$ be a graph and $\prec$ a linear ordering of its vertices.
Let $v_1\prec \cdots\prec v_n$ be the vertices of $H$.  Let $m$ be a positive integer and let
$T$ be the complete rooted $m$-ary tree of depth $n-1$.  For $i\in\{1,\ldots, n\}$,
let $T(v_i)$ be the set of vertices of $T$ at distance exactly $i-1$ from the root.
The graph $\AA_{H,\prec,m}$ has vertex set $V(T)$, with vertices $x\in T(v_i)$ and $y\in T(v_j)$
adjacent if and only if $i\neq j$, $v_iv_j\in E(H)$, and $x$ is an ancestor of $y$ in $T$ or vice versa.
We say that $T$ is the \emph{scaffolding} of $\AA_{H,\prec,m}$.

\begin{lemma}\label{lemma-ordtoall}
Let $k$ and $m$ be positive integers.  Let $H$ be a graph and $\prec$ a linear ordering of its vertices.
Suppose that for each $v\in V(H)$, the graph $H[\{u\in V(H) \colon v\preceq u\}]$ is connected and has diameter
at most $k$.  Then
$$\wcol_k(\AA_{H,\prec,m})\ge\min\bigl(m,\wcol_{\prec,k}(H)\bigr).$$
\end{lemma}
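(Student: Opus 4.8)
The plan is to fix an arbitrary linear ordering $\sigma$ of $V(\AA_{H,\prec,m})$ and prove $\wcol_{\sigma,k}(\AA_{H,\prec,m})\ge\min(m,\wcol_{\prec,k}(H))$; the lemma then follows by minimizing over $\sigma$. Write $v_1\prec\cdots\prec v_n$ for the vertices of $H$, let $T$ be the scaffolding, and for $u\in T(v_i)$ let $T_u$ be the subtree of $T$ rooted at $u$. The heart of the argument is a dichotomy proved by induction on the depth $n-i$ of $T_u$: \emph{either} (a) some vertex of $T_u$ has at least $m$ vertices $\sigma$-weakly-$k$-reachable from it in $\AA_{H,\prec,m}$, \emph{or} (b) there is a root-to-leaf path $u=p_0,p_1,\ldots,p_{n-i}$ of $T_u$, with $p_c\in T(v_{i+c})$, along which $\sigma$ is increasing: $p_0\prec_\sigma p_1\prec_\sigma\cdots\prec_\sigma p_{n-i}$. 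Applied with $u$ the root of $T$ this finishes the proof: in case (a) we get $\wcol_{\sigma,k}\ge m$, and in case (b) the set $\{p_0,\ldots,p_{n-1}\}$ induces a copy of $H$ (on a single branch all vertices are pairwise comparable in the ancestor order, so $p_c$ and $p_{c'}$ are adjacent exactly when $v_{1+c}v_{1+c'}\in E(H)$, making $p_c\mapsto v_{1+c}$ an isomorphism) on which $\sigma$ restricts to $\prec$; since weak reachability inside an induced subgraph embeds into weak reachability of the ambient graph, $\wcol_{\sigma,k}\ge\wcol_{\prec,k}(H)$.

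For the induction, the base case $n-i=0$ is immediate, as $T_u$ is a single vertex. For the step, let $y_1,\ldots,y_m\in T(v_{i+1})$ be the children of $u$ and apply the induction hypothesis to each $T_{y_t}$, of depth $n-i-1$. If some $T_{y_t}$ satisfies (a) we are done; otherwise each $T_{y_t}$ satisfies (b) and supplies an increasing branch $\beta_t=(y_t=q_{t,0},q_{t,1},\ldots)$ with $q_{t,c}\in T(v_{i+1+c})$. If $y_t\succ_\sigma u$ for some $t$, prepending $u$ to $\beta_t$ yields an increasing root-to-leaf path of $T_u$, so (b) holds. In the remaining case $y_t\prec_\sigma u$ for every $t$, and I will show that $u$ weakly-$k$-reaches all the $y_t$, giving $|\wreach_{\AA_{H,\prec,m},\sigma,k}(u)|\ge m$ and hence (a).

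To weakly-$k$-reach $y_t$ from $u$ I invoke the hypothesis on $H[\{v_i,\ldots,v_n\}]$, which is connected of diameter at most $k$: pick a shortest path from $v_i$ to $v_{i+1}$ in it, say $v_i=w_0,w_1,\ldots,w_\ell=v_{i+1}$ with $\ell\le k$, whose internal vertices $w_1,\ldots,w_{\ell-1}$ are distinct and lie among $v_{i+2},\ldots,v_n$. If $\ell=1$ then $u$ and $y_t$ are adjacent and $y_t\prec_\sigma u$ finishes it; otherwise realize this path in $T_u$ by keeping $u$ and $y_t$ as endpoints and placing the internal vertex corresponding to $w_j=v_{b_j}$ at the unique vertex of $\beta_t$ on level $b_j-1$ (well defined and strictly below $y_t$ because $b_j\ge i+2$). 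Consecutive vertices of the resulting sequence are adjacent in $\AA_{H,\prec,m}$ (their $T(v)$-classes carry an edge of $H$, and they are comparable in the ancestor order since $u$ sits above everything, $y_t$ above the $\beta_t$-part, and the $\beta_t$-part is a chain); it is a simple path because the chosen levels are pairwise distinct, so it has length $\ell\le k$; and every internal vertex lies strictly below $y_t$ on the increasing branch $\beta_t$, hence is $\succ_\sigma y_t$. Together with $y_t\prec_\sigma u$ this certifies $y_t\in\wreach_{\AA_{H,\prec,m},\sigma,k}(u)$.

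The main obstacle is precisely this realization step, and it is what dictates the design of $\AA_{H,\prec,m}$: when $v_i$ and $v_{i+1}$ are non-adjacent in $H$ the path from $u$ to $y_t$ must detour through several deeper levels, and nothing a priori bounds the $\sigma$-positions of the detour vertices; having an increasing branch below $y_t$ — handed down by the inductive hypothesis on the strictly smaller subtree — is exactly what forces every detour vertex to be $\sigma$-above $y_t$, while the diameter bound keeps the detour of length at most $k$. In the write-up I will make sure the induction is applied only to subtrees of strictly smaller depth, that (a) and (b) are genuinely exhaustive at each node, that $\AA_{H,\prec,m}$ restricted to $V(T_u)$ is the analogous construction built from $H[\{v_i,\ldots,v_n\}]$ so the inductive statement is self-consistent, and that the realized path really is simple.
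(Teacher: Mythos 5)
Your proof is correct, and it rests on the same underlying dichotomy as the paper's: either some non-leaf vertex of the scaffolding $T$ has all of its $m$ children below it in the chosen ordering, which is turned into $m$ weakly $k$-reachable vertices, or one can walk from the root to a leaf always stepping to a larger child, and that branch induces a copy of $H$ ordered as $\prec$. The difference is in how the first case is executed. You set up an induction on subtrees so that, at a vertex $u$ whose children $y_1,\ldots,y_m$ are all smaller, each child comes equipped with an inductively supplied increasing branch $\beta_t$, and you reach $y_t$ itself from $u$ by realizing a short $H$-path from $v_i$ to $v_{i+1}$ along $\beta_t$; the increasing branch is what forces every detour vertex to be larger than $y_t$, and your realization step (distinct levels, all vertices on one branch hence pairwise comparable, adjacency inherited from $H$) is sound. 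The paper sidesteps exactly the obstacle you flagged, and thereby the induction, by a small twist: for each child subtree it targets not the child but the minimum vertex $x$ (in the ambient ordering) of the subgraph $A_i$ induced by $z$, the child, and its descendants; a path of length at most $k$ from $z$ to $x$ inside $A_i$ exists by the same branch-realization idea, and its internal vertices are automatically larger than $x$ by minimality, so no control over the ordering along a branch is needed. Thus the paper's proof is a direct two-case argument over a fixed ordering, while yours carries an inductive invariant; in exchange, your version identifies concretely which vertices are reached (the children) and only uses that each $v_i$ is within distance $k$ of its successor $v_{i+1}$ in $H[\{u\colon v_i\preceq u\}]$, a slightly weaker consequence of the hypothesis than the eccentricity bound the paper's case uses.
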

\begin{proof}
Consider any linear ordering $\vartriangleleft$ of the vertices of $\AA_{H,\prec,m}$.
Let $T$ be the scaffolding of $\AA_{H,\prec,m}$ and suppose first that there exists
a non-leaf vertex $z\in V(T)$ such that all children $z_1$, \ldots, $z_m$ of $z$ in $T$
are smaller than $z$ in the ordering $\vartriangleleft$.  For $i=1,\ldots, m$, let
$A_i$ be the subgraph of $\AA_{H,\prec,m}$ induced by $z$, $z_i$, and all descendants
of $z_i$ in $T$.  Let $v$ be the vertex of $H$ such that $z\in T(v)$; since the graph
$H[\{u\in V(H)\colon v\preceq u\}]$ has diameter at most $k$, every vertex of $A_i$ is at distance
at most $k$ from $z$.  Since $z_i\vartriangleleft z$, we conclude that a vertex of $A_i$
distinct from $z$ is weakly $k$-reachable from $z$.  Since this is the case for each $i\in\{1,\ldots, m\}$
and the subgraphs $A_1$, \ldots, $A_m$ intersect only in $z$, it follows that
$$\wcol_{\vartriangleleft,k}(\AA_{H,\prec,m})\ge |\wreach_{\AA_{H,\prec,m},\vartriangleleft,k}(z)|\ge m.$$

Hence, we can assume that each non-leaf vertex $z$ of $T$ has a child which is greater than $z$ in the
ordering $\vartriangleleft$.  Consequently, $T$ contains a path $u_1u_2\ldots u_n$ from the root to a leaf
such that $u_1\vartriangleleft \cdots \vartriangleleft u_n$.  The subgraph $A$ of $\AA_{H,\prec,m}$ induced by
$\{u_1,\ldots, u_n\}$ with ordering $\vartriangleleft$ is isomorphic to $H$ with ordering $\prec$, and thus
$$\wcol_{\vartriangleleft,k}(\AA_{H,\prec,m})\ge \wcol_{\vartriangleleft,k}(A)=\wcol_{\prec,k}(H).$$
\end{proof}

Moreover, assuming $H$ has a sufficiently generic representation by comparable axis-aligned boxes, we can also find
such a representation for $\AA_{H,\prec,m}$.  Given an axis-aligned box $v$ in $\mathbb{R}^d$ and $i\in\{1,\ldots,d\}$,
let $\ell_i(v)$ denote the length of $v$ in the $i$-th coordinate.  We say that a sequence $v_1$, \ldots, $v_n$ of axis-aligned boxes is
\emph{$m$-shrinking} if $\ell_d(v_i) > m\ell_d(v_{i+1})$ holds for $1\le i\le n-1$.  See the bottom part of Figure~\ref{fig-constr}
for an illustration of the following construction.

\begin{lemma}\label{lemma-reproa}
Let $d$, $t$ and $m$ be positive integers.
Let $S$ be a $t$-thin finite set of comparable axis-aligned boxes in $\mathbb{R}^d$ and let $H$ be the intersection graph of $S$.
Let $T$ be the scaffolding of $\AA_{H,\prec,m}$.  Let $\prec$ be a sizewise linear ordering of $S$ and
let $v_1$, \ldots, $v_n$ be the sequence of vertices of $H$ in this order.
If this sequence is $m$-shrinking, then $\AA_{H,\prec,m}$ is the intersection graph of a $t$-thin set of
comparable axis-aligned boxes in $\mathbb{R}^{d+1}$, where for $v\in V(H)$ and $u\in T(v)$,
$u$ is the product of $v$ with an interval of length $\ell_d(v)$.
\end{lemma}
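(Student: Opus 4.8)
The plan is to construct the representation of $\AA_{H,\prec,m}$ in $\mathbb{R}^{d+1}$ explicitly, using the scaffolding tree $T$ to assign disjoint ``slots'' in the new $(d{+}1)$-st coordinate, and then to verify the three required properties: that the assigned boxes are comparable, that their intersection graph is exactly $\AA_{H,\prec,m}$, and that $t$-thinness is preserved. First I would set up coordinates: for $v\in V(H)$ with $v=v_i$, and for $u\in T(v_i)$, define the box $B(u)=v_i\times J(u)$, where $J(u)$ is an interval of length $\ell_d(v_i)$ to be chosen. The natural choice is a recursive one down the tree: place $J(\text{root})$ arbitrarily (say $[0,\ell_d(v_1)]$), and given $J(u)$ for $u\in T(v_i)$, assign the $m$ children $u^{(1)},\dots,u^{(m)}$ of $u$ pairwise disjoint subintervals of $J(u)$, each of length $\ell_d(v_{i+1})$; this is possible because $m\ell_d(v_{i+1})<\ell_d(v_i)=|J(u)|$ by the $m$-shrinking hypothesis, so there is room for $m$ such subintervals inside $J(u)$, and we may as well make them disjoint. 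Note that the depth of $T$ is $n-1$, so this recursion terminates, and every $u\in T(v_i)$ has $J(u)\subseteq J(u')$ whenever $u'$ is an ancestor of $u$.

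Next I would check comparability. Since $S$ is a set of comparable axis-aligned boxes in $\mathbb{R}^d$, for any two boxes $v_i,v_j\in S$ a translate of one is a subset of the other, i.e.\ $\ell_\alpha(v_i)\le\ell_\alpha(v_j)$ for all coordinates $\alpha\in\{1,\dots,d\}$, or the reverse. For the new boxes $B(u)=v_i\times J(u)$ and $B(u')=v_j\times J(u')$ (with $u\in T(v_i)$, $u'\in T(v_j)$), the first $d$ side lengths are exactly those of $v_i$ and $v_j$, and the $(d{+}1)$-st side lengths are $\ell_d(v_i)$ and $\ell_d(v_j)$ respectively. If $v_i$'s sides are all $\le$ those of $v_j$ then in particular $\ell_d(v_i)\le\ell_d(v_j)$, so all $d{+}1$ side lengths of $B(u)$ are $\le$ those of $B(u')$; hence a translate of $B(u)$ fits inside $B(u')$. (If $i=j$ the two boxes have identical side lengths and comparability is trivial.) So the new family is comparable.

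Then I would verify that the intersection graph is exactly $\AA_{H,\prec,m}$. Take $u\in T(v_i)$, $u'\in T(v_j)$, distinct. If $i=j$, then $v_i=v_j$ as sets (same object) so the projections to the first $d$ coordinates intersect, but $J(u)$ and $J(u')$ are disjoint: indeed distinct vertices at the same depth have a common ancestor $w$ at some depth $\le i$, and below $w$ they descend into distinct children whose intervals are disjoint, and all further descendants stay nested inside those disjoint intervals; hence $B(u)\cap B(u')=\emptyset$, matching $\AA_{H,\prec,m}$ (no edges within a level). If $i\ne j$, say $i<j$: the projections to the first $d$ coordinates intersect iff $v_i\cap v_j\ne\emptyset$ iff $v_iv_j\in E(H)$; and the intervals $J(u),J(u')$ intersect iff one contains the other iff $u$ is an ancestor of $u'$ in $T$ — here I use that $J(u')\subseteq J(u)$ when $u$ is an ancestor, and that if $u$ is not an ancestor of $u'$ then, tracing to the lowest common ancestor, $u$ and the depth-$i$ ancestor of $u'$ are distinct vertices of $T(v_i)$ with disjoint intervals, so $J(u')\subseteq J(\text{that ancestor})$ is disjoint from $J(u)$. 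Combining, $B(u)\cap B(u')\ne\emptyset$ iff $v_iv_j\in E(H)$ and $u,u'$ are in ancestor–descendant relation, which is precisely the adjacency rule of $\AA_{H,\prec,m}$.

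Finally, for $t$-thinness: take a point $p=(q,y)\in\mathbb{R}^{d}\times\mathbb{R}$ and count boxes $B(u)$ containing $p$ in their interior. For such $u\in T(v_i)$ we need $q$ in the interior of $v_i$ and $y$ in the interior of $J(u)$. Among all $u$ with $q$ in the interior of $v_i$, for a fixed value of $i$ the intervals $J(u)$ over $u\in T(v_i)$ are pairwise disjoint, so at most one of them can contain $y$; hence the number of $u$ with $p$ in the interior of $B(u)$ is at most the number of distinct $v_i\in S$ whose interior contains $q$, which is at most $t$ since $S$ is $t$-thin. This gives $t$-thinness of the new family. I expect the construction of the nested interval assignment and the careful ancestor/disjointness bookkeeping (the $i=j$ and ``$u$ not an ancestor of $u'$'' cases) to be the only place requiring real care; everything else is a direct unwinding of definitions, and the $m$-shrinking hypothesis is used exactly once, to guarantee the $m$ children's intervals fit disjointly inside the parent's.
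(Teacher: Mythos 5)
Your proposal is correct and follows essentially the same approach as the paper: represent each $u\in T(v_i)$ by $v_i\times J(u)$, assigning to the children of each tree vertex pairwise disjoint subintervals of the parent's interval (which the $m$-shrinking condition makes possible), so that nesting of intervals encodes the ancestor relation. The paper does this with an explicit label-based formula and an $\varepsilon$ slack rather than a recursion, and leaves the comparability, adjacency, and thinness checks implicit, which you spell out correctly.
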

\begin{proof}
Let $\varepsilon>0$ be small enough so that $\ell_d(v_i) \ge m(\ell_d(v_{i+1})+\varepsilon)$ holds for $1\le i\le n-1$. 
For each non-leaf vertex $z$ of $T$, assign labels $0$, \ldots, $m-1$ to the edges from $z$ to the children of $z$ 
in any order; let $l(e)$ denote the label assigned to the edge $e$.  For a vertex $y$ of $T$, if $y_1y_2\ldots y_c$ is
the path in $T$ from the root to $y$, then let $l(y)=(l(y_1y_2),l(y_2y_3),\ldots,l(y_{c-1}y_c))$.  Note that $y$ is an ancestor
of a vertex $x$ in $T$ if and only if $l(y)$ is a prefix of $l(x)$.  Let $s(y)=\sum_{i=1}^{c-1} (l(y))_i(\ell_d(v_{i+1})+\varepsilon)$,
and let $I(y)$ be the interval $[s(y),s(y)+\ell_d(v_c)]$.  Observe that if $y$ is an ancestor of a vertex $x$ in $T$, then
$I(x)\subset I(y)$, and if $x$ is neither an ancestor nor a descendant of $y$ in $T$, then $I(x)\cap I(y)=\emptyset$.

Hence, letting each vertex $y$ at distance $c-1$ from the root of $T$ be represented by the box $v_c\times I(y)$ in $\mathbb{R}^{d+1}$,
we obtain a $t$-thin intersection representation of $\AA_{H,\prec,m}$ as described in the statement of the lemma.
\end{proof}

To verify the assumptions of Lemma~\ref{lemma-ordtoall}, the following concept is useful.
Let $\prec$ be a linear ordering of vertices of a graph $G$.  A \emph{decreasing spanning tree}
is a spanning tree $T$ of $G$ rooted in the maximum vertex such that any path in $T$ starting in the root
is decreasing.
\begin{lemma}\label{lemma-verass}
Let $k\ge 0$ be an integer.
Let $\prec$ be a linear ordering of vertices of a graph $G$.  If $G$ has a decreasing spanning tree $T$ of depth at most $k$,
then $\wcol_{\prec,k}(G)=|V(G)|$, and for each $v\in V(G)$, the graph $G[\{u\in V(H)\colon v\preceq u\}]$ is connected and has diameter at most $2k$.
\end{lemma}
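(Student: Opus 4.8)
The plan is to use the root $\rho$ of the decreasing spanning tree $T$ — which by definition is the $\prec$-maximum vertex of $G$ — as a universal hub reachable cheaply from everywhere, and to prove both assertions by simply unwinding the reachability definitions along tree-paths.

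First, for the equality $\wcol_{\prec,k}(G)=|V(G)|$, I would show that $\wreach_{G,\prec,k}(\rho)=V(G)$. Fix any $u\in V(G)$ and let $P_u$ be the unique path in $T$ from $\rho$ to $u$. Since $T$ is a decreasing spanning tree, $P_u$ is a decreasing path (it starts at the root), so all of its internal vertices are strictly $\succ$-above its endpoint $u$; moreover $P_u$ has length at most the depth of $T$, hence at most $k$, and $u\preceq\rho$. By the definition of weak $k$-reachability this means $u\in\wreach_{G,\prec,k}(\rho)$. As $u$ was arbitrary, $\wcol_{\prec,k}(G)\ge|\wreach_{G,\prec,k}(\rho)|=|V(G)|$, and the reverse inequality is immediate since $\wreach_{G,\prec,k}(\rho)\subseteq V(G)$.

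Second, fix $v\in V(G)$ and put $G_v=G[\{u\in V(G)\colon v\preceq u\}]$. The key observation is that for every $u\in V(G_v)$ the tree-path $P_u$ from $\rho$ to $u$ is decreasing, so every vertex on $P_u$ is $\succeq u\succeq v$; in particular $P_u$ is entirely contained in $G_v$. Since $\rho$ is the $\prec$-maximum vertex we have $v\preceq\rho$, i.e.\ $\rho\in V(G_v)$, and $P_u$ has length at most $k$. Hence every vertex of $G_v$ is joined to $\rho$ inside $G_v$ by a path of length at most $k$; this gives connectivity of $G_v$, and concatenating two such paths yields, for any $u,u'\in V(G_v)$, a walk (hence a path) of length at most $2k$ from $u$ to $u'$ in $G_v$, so $\diam(G_v)\le 2k$.

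I do not anticipate a real obstacle here; the argument is essentially definitional. The only points requiring care are (a) noting that a decreasing path starting at the root forces its \emph{internal} vertices to lie above the \emph{far} endpoint, which is precisely what the weak-reachability definition demands, and (b) checking that $\rho$ genuinely lies in every $G_v$, which holds because $\rho$ is $\prec$-maximal. The depth bound on $T$ is used twice: once to bound the length of a single root-to-vertex path by $k$, and once more (after concatenation) to obtain the diameter bound $2k$. Degenerate cases ($k=0$, or $G_v$ a single vertex) are covered trivially.
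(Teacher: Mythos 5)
Your proof is correct and follows essentially the same route as the paper's: the root (the $\prec$-maximum vertex) weakly $k$-reaches every vertex along decreasing tree paths of length at most $k$, and each set $\{u\colon v\preceq u\}$ is ancestor-closed, so the restriction of $T$ to it is a spanning tree of depth at most $k$, giving connectivity and diameter at most $2k$. Your write-up just spells out the definitional details that the paper leaves implicit.
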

\begin{proof}
Let $z$ be the maximum vertex of $G$.  Since $T$ is decreasing and has depth at most $k$, we
have $\wcol_{\prec,k}(G)\ge |\wreach_{G,\prec,k}(z)|=|V(G)|$.  Moreover, for each $v\in V(G)$, letting
$C_v=\{u\in V(H)\colon v\preceq u\}$, observe that for each $x\in C_v$, all ancestors of $x$ also belong to $C_v$.
Hence, $T[C_v]$ is a spanning tree of $G[C_v]$ of depth at most $k$, and thus $G[C_v]$ is connected and has diameter at most $2k$.
\end{proof}

We now find some basic graphs to which we can apply the construction.
\begin{figure}
\begin{center}\includegraphics[width=0.9\textwidth]{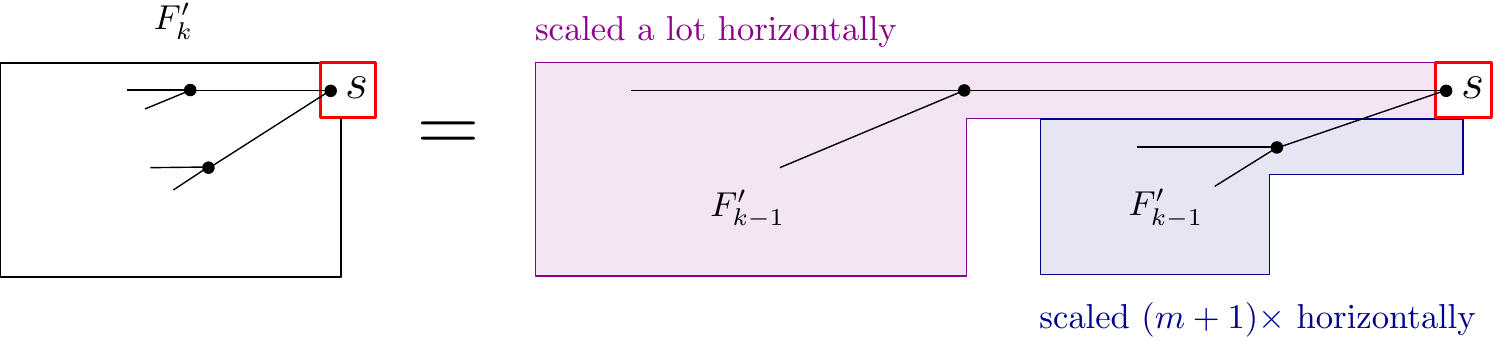}\end{center}

\caption{The construction from Lemma~\ref{lemma-tree}.}\label{fig-tree}
\end{figure}

\begin{lemma}\label{lemma-tree}
For all integers $k\ge 0$ and $m\ge 1$, there exists a graph $F'_k$ with $2^{k+1}-1$ vertices represented as the touching graph
of an $m$-shrinking sequence of comparable axis-aligned rectangles in $\mathbb{R}^2$,
such that $F'_k$ has a spanning tree of depth at most $k$ decreasing in the sizewise ordering.
\end{lemma}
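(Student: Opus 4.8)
The plan is to construct $F'_k$ recursively, mirroring the binary-tree structure that underlies the target vertex count $2^{k+1}-1$. For $k=0$, take a single rectangle. For the inductive step, suppose we have built $F'_{k-1}$ as a touching graph of an $m$-shrinking sequence of comparable axis-aligned rectangles with a decreasing (in the sizewise order) spanning tree of depth at most $k-1$ rooted at its maximum vertex $z$. I would form $F'_k$ by taking a new, very large rectangle $R$ (large enough to be first in the sizewise order and to leave room for two shrunken copies), and placing two disjoint scaled copies of the representation of $F'_{k-1}$ inside (or adjacent to) $R$, each positioned so that the root of each copy touches $R$ but the two copies are otherwise disjoint. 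Then $R$ together with the edges to the two roots, plus the two depth-$(k-1)$ trees, gives a spanning tree of depth at most $k$ rooted at $R$; and $R$ is the unique maximum in the sizewise order, so every root-to-leaf path in this tree is decreasing. The vertex count satisfies $|V(F'_k)| = 1 + 2|V(F'_{k-1})| = 1 + 2(2^k-1) = 2^{k+1}-1$, as desired.

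The key points to verify are geometric. First, all rectangles remain \emph{comparable} axis-aligned boxes: since comparability is a transitive-like relation preserved under scaling, and the two embedded copies are themselves comparable internally, I need only ensure $R$ is comparable with (in fact, can be translated to contain) every rectangle of both copies, which is automatic if $R$ is chosen sufficiently large and appropriately shaped. Second, the sequence must be $m$-shrinking: the lengths in the $d$-th (here second) coordinate must strictly decrease by a factor more than $m$ along the whole sequence. I would order the rectangles so that $R$ comes first, then the rectangles of the first copy in its own $m$-shrinking order, then the rectangles of the second copy; by scaling the two copies down by a large enough common factor before insertion, I can guarantee the jump from $R$ to the first rectangle of copy one, and from the last rectangle of copy one to the first of copy two, both exceed the factor $m$, while the internal jumps are inherited from the induction hypothesis. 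Third, the touching structure must realize exactly the edge set: the only edges between $R$ and a copy are the two root-touchings I arrange deliberately, and there are no edges between the two copies because they are placed disjointly; within each copy the touching pattern is inherited.

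The main obstacle I expect is bookkeeping the \emph{simultaneous} requirements of comparability, $m$-shrinking, touching (not overlapping, and not leaving spurious gaps that destroy required contacts), and keeping everything a \emph{touching} ($1$-thin) representation, all while two copies must fit disjointly inside the neighborhood of $R$. Concretely, one must choose the scaling factor for the embedded copies, the dimensions of $R$, and the placement offsets in the right order: pick the scaling factor large enough relative to $m$ and to the diameter of the $F'_{k-1}$ representation, then pick $R$ to be a thin tall (or wide) rectangle long enough in one coordinate to host the two copies side by side with their roots flush against one side of $R$, and short enough in the $d$-th coordinate relative to the largest rectangle of the copies to get the $m$-shrinking inequality at the $R$-to-copy interface. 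A picture (as in Figure~\ref{fig-tree}) makes this transparent; the formal write-up is just a careful choice of constants. Once $F'_k$ is built, Lemma~\ref{lemma-verass} (via the depth-$\le k$ decreasing spanning tree) would then immediately give $\wcol_{\prec,k}(F'_k)=|V(F'_k)|=2^{k+1}-1$ and the connectivity/diameter conditions needed to later invoke Lemma~\ref{lemma-ordtoall} and Lemma~\ref{lemma-reproa}.
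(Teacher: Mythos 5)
Your construction is oriented the wrong way around, and this is fatal rather than cosmetic. In the paper's conventions a sizewise ordering $\prec$ puts \emph{larger} objects first ($u\prec v$ implies $\diam(u)\ge\diam(v)$), so the maximum vertex of $\prec$ --- the vertex at which a decreasing spanning tree must be rooted --- is the \emph{smallest} rectangle, and along every root-to-leaf path of a decreasing tree the rectangles must \emph{grow}. Accordingly, the paper's recursion keeps the new vertex $s$ as a unit square (the smallest object, hence last in $\prec$) and attaches two copies of $F'_{k-1}$ that are scaled \emph{up} horizontally, so that the smallest rectangle of each copy touches $s$ and already exceeds $s$ by a factor larger than $m$. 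You do the opposite: a huge new root $R$ with two shrunken copies attached. The tree you describe is rooted at the $\prec$-minimum, not the $\prec$-maximum, and its root-to-leaf size profile is not monotone in $\prec$ (from $R$ you jump down to the smallest rectangle of a copy and then sizes grow again within the copy), so it is not a decreasing spanning tree in the sense used by Lemma~\ref{lemma-verass}. Worse, the represented graph you build does not satisfy the lemma at all: with your size assignment, under the sizewise ordering a vertex can only weakly reach its (larger) ancestors along the unique tree path, giving $\wcol_{\prec,k}\le k+1$, and the only spanning tree rooted at the smallest rectangle (a leaf) has depth about $2k$ and is non-monotone. Hence the construction cannot be fed into Lemma~\ref{lemma-verass}, Lemma~\ref{lemma-ordtoall}, or the proof of Theorem~\ref{thm-lb}(i), which is the entire purpose of the statement.

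There is also a quantitative slip in your $m$-shrinking bookkeeping: you propose to scale the two copies down ``by a large enough common factor,'' but the critical ratio between the last (smallest) rectangle of the first copy and the first (largest) rectangle of the second copy is invariant under a common scaling, and since the copies are congruent up to scale this ratio is below $1$, so the global sequence cannot be $m$-shrinking that way. The two copies must be scaled by very different factors, as in the paper: one copy is stretched horizontally by $m+1$, the other by a much larger factor chosen so that all of its rectangles are more than $m$ times longer than everything already placed. The paper's further normalization --- all rectangles have vertical side $1$ and only horizontal lengths vary, the horizontal coordinate being the one relevant for $m$-shrinking --- is what makes comparability of all boxes (including across the two differently scaled copies) and disjointness of interiors automatic; your ``choose $R$ large enough'' only handles comparability with $R$, not between the copies.
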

\begin{proof}
We proceed by the induction on $k$.  For each $k$, we construct a representation of $F'_k$ where the last vertex is
represented by a unit square and the rest of the representation is contained in the lower left
quadrant starting from the middle of the upper side of $s$.  The second coordinate (relevant for the definition
of an $m$-shrinking sequence) is the horizontal one.  In the vertical coordinate, all rectangles have length $1$.
See Figure~\ref{fig-tree} for an illustration of the construction.

The graph $F'_0$ is a single vertex represented by $s$.  For $k\ge 1$, to obtain a representation of $F'_k$, we scale the
representation of $F'_{k-1}$ in the horizontal direction by the factor of $m+1$ and place it so that its
upper right corner is the middle of the lower side of $s$.  Then we add another copy of a representation of $F'_{k-1}$,
scaled in the horizontal direction so that all its rectangles are more than $m$ times longer than the already
placed ones and so that when we place its upper right corner at the upper left corner of $s$, their interiors are disjoint
from the already placed rectangles.

Observe that $F'_k$ contains a spanning complete binary tree of depth $k$ rooted in $s$, with the vertices along
each path from the root increasing in size, and thus decreasing in the sizewise ordering. 
\end{proof}

\begin{figure}
\begin{center}\includegraphics[width=0.9\textwidth]{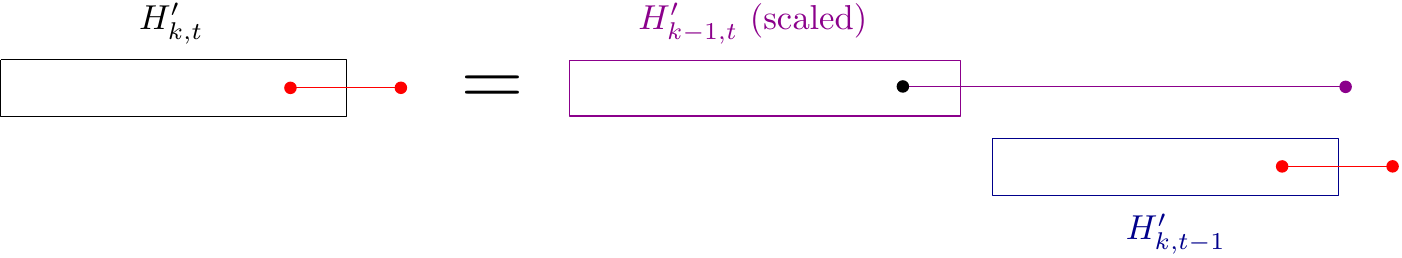}\end{center}

\caption{The construction from Lemma~\ref{lemma-thin}.}\label{fig-thin}
\end{figure}

\begin{lemma}\label{lemma-thin}
For all integers $k\ge 0$ and $m,t\ge 1$, there exists a graph $H'_{k,t}$ with $\binom{k+t}{t}$ vertices
represented by a $t$-thin $m$-shrinking sequence of intervals in $\mathbb{R}$,
such that $H'_{k,t}$ has a spanning tree of depth at most $k$ decreasing in the sizewise ordering.
Furthermore, $H'_{k,t}$ is properly $(t+1)$-colorable.
\end{lemma}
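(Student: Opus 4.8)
I would build $H'_{k,t}$ by induction on $t$, uniformly in $k$, carrying along a structural invariant about the root. For $t=1$, take the path on $k+1$ vertices, realized by $k+1$ pairwise touching intervals of strictly decreasing lengths with successive ratios exceeding $m$, placed so that the shortest interval $\rho$ is an endpoint of the path and all others lie to one side of it. Then $\rho$ is the unique shortest interval, the path rooted at $\rho$ is a decreasing spanning tree of depth $k$, and the path is properly $2$-colorable. The invariant I would maintain is: $\rho$ has a distinguished endpoint $p$; apart from the ``spine'' path containing $\rho$, every interval lies in the closed half-line bounded by $p$ on the side of $\rho$; and the intervals reaching $p$ from that side form a chain under inclusion with $\rho$ innermost (for $t=1$ this chain is just $\{\rho\}$, and that half-line is otherwise empty).

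For $t\ge 2$ I would glue a copy of $H'_{k,t-1}$ (root $\rho$, shortest in its representation) with a copy of $H'_{k-1,t}$ (root $\sigma$, depth $\le k-1$). Rescale and reposition the second copy so that $\sigma$ becomes an interval that shares the distinguished endpoint $p$ of $\rho$ and strictly contains $\rho$, thereby entering the chain at $p$; its remaining intervals --- its own accumulated wrappers and spine --- are rescaled and reflected so that they are dragged into regions previously empty rather than piling up at $p$, and scaled so that $\sigma$ stays shortest in $H'_{k-1,t}$ yet is longer than $\rho$. Then $\sigma\supsetneq\rho$ gives the edge $\rho\sigma$; $\rho$ is still the unique globally shortest interval, hence the root of $H'_{k,t}$; and all edges from the second copy into the first are incident to $\sigma$. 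The vertex count is $\binom{k+t-1}{t-1}+\binom{k+t-1}{t}=\binom{k+t}{t}$; hanging, below $\rho$ through $\rho\sigma$, the decreasing spanning tree of $H'_{k-1,t}$ under that of $H'_{k,t-1}$ gives a decreasing spanning tree of depth $\le\max(k,\,1+(k-1))=k$; and a proper $(t+1)$-coloring is assembled from a proper $t$-coloring of $H'_{k,t-1}$ --- in which the at most $t$ neighbours of $\sigma$ inside that copy, forming a clique there, use all $t$ colors --- together with a proper $(t+1)$-coloring of $H'_{k-1,t}$ in which $\sigma$ receives the remaining color (permute color classes). Together with Lemma~\ref{lemma-verass} this is what gives $\wcol_{\prec,k}(H'_{k,t})=|V(H'_{k,t})|$, which is how the statement is used later.

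The crux, and what I expect to be the main obstacle, is to perform this gluing so that the whole representation stays $t$-thin while leaving enough room at the root to iterate; the invariant is tailored precisely for this. Before the step, the only intervals meeting the interior of $\rho$ are $\rho$ together with the wrappers $\sigma_2,\dots,\sigma_{t-1}$ introduced at the earlier steps --- at most $t-1$ of them, matching the $(t-1)$-thinness of $H'_{k,t-1}$ --- and the current step inserts exactly one more, $\sigma=\sigma_t$, into the chain, raising the cover of $\rho$'s interior (and of each region previously occupied by an earlier wrapper) by exactly one, to at most $t$; every other portion of the new copy sits either in a region already covered at most $t$ times or in previously empty space, so $t$-thinness is preserved and the invariant passes to $H'_{k,t}$. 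What remains is the careful but routine geometry: choosing the scalings and orientations so that $\sigma$ wraps $\rho$, enters the chain without its internal structure piling up at $p$, and pushes the rest of $H'_{k-1,t}$ into empty territory; and rechecking the $m$-shrinking inequality after interleaving the two size sequences --- which is exactly the configuration depicted in Figure~\ref{fig-thin}.
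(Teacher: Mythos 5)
Your overall scheme coincides with the paper's: the recursion building $H'_{k,t}$ from $H'_{k,t-1}$ and $H'_{k-1,t}$ glued along an edge between their roots, the count $\binom{k+t-1}{t-1}+\binom{k+t-1}{t}=\binom{k+t}{t}$, the decreasing spanning tree of depth $\max(k,1+(k-1))=k$, and the use of Lemma~\ref{lemma-verass} afterwards all match. The genuine gap is in the gluing invariant, and it is not ``routine geometry'': you insist that $\sigma$ \emph{strictly contains} $\rho$ and enters a nested chain of wrappers at the shared endpoint $p$. But the attached copy of $H'_{k-1,t}$ satisfies the same invariant, so its root $\sigma$ is itself innermost in a chain of intervals containing it. Any placement of that copy by a similarity of the line (scaling, reflection, translation) preserves containment, so every wrapper of $\sigma$ contains $\sigma\supsetneq\rho$ and hence covers the interior of $\rho$; ``reflecting and dragging the remaining intervals into previously empty regions'' cannot undo a containment relation, and moving those intervals independently of $\sigma$ would forfeit the inductively guaranteed intersection pattern and thinness of the second copy, which your hypothesis does not let you re-derive. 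Consequently the bookkeeping ``the current step inserts exactly one more wrapper'' is false: with your invariant the number $c(k,t)$ of intervals covering the interior of the root obeys $c(k,t)=c(k,t-1)+c(k-1,t)$ with $c(k,1)=c(0,t)=1$, so already $c(k,2)=k+1$ and $t$-thinness fails for all $t\ge 2$, $k\ge 2$.

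The repair is essentially the paper's (opposite) invariant: the right end of the smallest interval is the \emph{strictly} rightmost point of the whole representation, so no interval contains the root. Then one scales the second copy so much that the (strict, hence positive) gap between its root's right end and all of its other intervals exceeds the entire extent of the first copy; placing that root's right end slightly to the left of the old root's right end, \emph{only} this one interval of the second copy meets the first copy, and it overlaps but does not contain the old root, whose exposed tip keeps the invariant alive. This is exactly what makes the ``$+1$ to the coverage'' count valid, giving $(t-1)+1=t$. Two smaller points: for the $m$-shrinking condition you need $\sigma$ (indeed all of the second copy) to exceed \emph{all} intervals of the first copy by a factor more than $m$, not merely to exceed $\rho$; and with this gluing $\sigma$ may meet many vertices of the first copy, not just a small clique, so the cleaner route to $(t+1)$-colorability is the paper's: the graph is an interval graph of clique number at most $t+1$, hence perfect.
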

\begin{proof}
We construct a representation of $H'_{k,t}$ with the additional property that the right end of the smallest interval
is the strictly rightmost point of the whole representation.  See Figure~\ref{fig-thin} for an illustration of the construction.

We proceed by the induction on $k+t$.  If $k=0$, the representation of $H'_{k,t}$ consists of a single unit interval.
If $t=1$, then the representation consists of an $m$-shrinking sequence of $k+1$ intervals intersecting only in endpoints.
Hence, suppose that $k\ge 1$ and $t\ge 2$.  Then the representation consists of the representation $A$ of $H'_{k,t-1}$
and of the representation $B$ of $H'_{k-1,t}$ scaled so that all its intervals are more than $m$ times longer than all
intervals in $A$ and so that when we place the rightmost point of $B$ slightly to the left of the rightmost point of $A$,
only the smallest interval of $B$ intersects $A$.

Observe that $H'_{k,t}$ has a spanning tree of depth $k$ rooted in the smallest vertex, with the vertices along
each path from the root increasing in size, and thus decreasing in the sizewise ordering.
Finally, note that $H'_{k,t}$ is an interval graph with clique number at most $t+1$.  Since interval graphs are perfect,
$H'_{k,t}$ is properly $(t+1)$-colorable.
\end{proof}

As a final ingredient, we note that we can trade thinness for dimension.

\begin{lemma}\label{lemma-trade}
For a positive integer $d$, let $S=\{v_1,\ldots,v_n\}$ be a finite set of hypercubes in $\mathbb{R}^d$,
and let $G$ be the intersection graph of $S$.  For any set $Y\subseteq\{1,\ldots,n\}$, there exists a set
$\{u_1,\ldots,u_n\}$ of hypercubes in $\mathbb{R}^{d+1}$ whose intersection graph is isomorphic to $G$
via the isomorphism mapping $u_i$ to $v_i$ for each $i$, such that
\begin{itemize}
\item for $1\le i<j\le n$, if $v_i$ and $v_j$ have disjoint interiors, then $u_i$ and $u_j$ have disjoint interiors, and
\item for $i\in Y$ and $j\in \{1,\ldots,n\}\setminus Y$, the hypercubes $u_i$ and $u_j$ have disjoint interiors.
\end{itemize}
\end{lemma}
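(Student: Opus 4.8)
\textbf{Proof proposal for Lemma~\ref{lemma-trade}.}
The plan is to add one new coordinate, say the $(d+1)$-st, in which every hypercube $u_i$ keeps the \emph{same} side length it had in $\mathbb{R}^d$ (so that $u_i$ is genuinely a hypercube), but is placed at one of two well-separated positions according to whether $i\in Y$ or $i\notin Y$. The key point is that we must not create any new intersections among pairs that used to intersect, so the two ``slabs'' for $Y$ and for $\{1,\dots,n\}\setminus Y$ in the new coordinate must each be wide enough to contain the full range of side lengths, and must overlap enough that no old edge is destroyed, yet be offset enough that no cross pair $i\in Y$, $j\notin Y$ meets.

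First I would record the relevant scale: let $\Lambda=\max_i \ell(v_i)$ and $\mu=\min_i \ell(v_i)$, where $\ell(v_i)$ is the side length. For each $i$ let $u_i$ be the hypercube $v_i\times J_i\subseteq\mathbb{R}^{d+1}$, where $J_i$ is an interval of length $\ell(v_i)$ chosen as follows: if $i\in Y$, put the \emph{left} endpoint of $J_i$ at $0$, i.e.\ $J_i=[0,\ell(v_i)]$; if $i\notin Y$, put the \emph{right} endpoint of $J_i$ at some fixed value $c>0$, i.e.\ $J_i=[c-\ell(v_i),c]$. Then $u_i$ is a hypercube (all $d+1$ side lengths equal $\ell(v_i)$), and the projection to the first $d$ coordinates recovers $v_i$, so the claimed isomorphism of intersection graphs will follow once we check the two bullets plus the (implicit) requirement that old edges are preserved.

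The three things to verify are: (1) if $v_i\cap v_j\neq\emptyset$ then $u_i\cap u_j\neq\emptyset$ (edges preserved); (2) if $v_i,v_j$ have disjoint interiors then so do $u_i,u_j$; (3) for $i\in Y$, $j\notin Y$, the cubes $u_i,u_j$ have disjoint interiors. For (2), disjoint interiors in the first $d$ coordinates already force disjoint interiors in $\mathbb{R}^{d+1}$, so (2) is immediate and in particular implies the first bullet of the statement; and for the isomorphism of intersection graphs we then also need that two \emph{intersecting} cubes stay intersecting, which is (1). For (1): if $i,j$ are on the same side of $Y$, the intervals $J_i,J_j$ share the endpoint $0$ (resp.\ $c$), so $J_i\cap J_j\neq\emptyset$ and hence $u_i\cap u_j=(v_i\cap v_j)\times(J_i\cap J_j)\neq\emptyset$; if $i\in Y$ and $j\notin Y$ then $v_i\cap v_j\neq\emptyset$ need not be preserved, but that is exactly what (3) wants, so there is no conflict — we only need (1) in the same-side case, and the cross case is handled by (3). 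For (3): $J_i=[0,\ell(v_i)]$ and $J_j=[c-\ell(v_j),c]$, and these have disjoint interiors as soon as $\ell(v_i)\le c-\ell(v_j)$, which holds for \emph{all} such $i,j$ provided $c\ge 2\Lambda$; choosing $c=2\Lambda$ (or anything larger) makes $u_i$ and $u_j$ meet at most on the boundary hyperplane, so their interiors are disjoint, giving both (3) and the second bullet.

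The only subtlety — and the step I would be most careful about — is making sure nothing in the construction forces two cubes that are \emph{equal or nested} in $\mathbb{R}^d$ to become equal or to cross in a way that changes adjacency; but since adjacency in an intersection graph only records whether $u\cap v\neq\emptyset$, and we have shown $u_i\cap u_j\neq\emptyset \iff v_i\cap v_j\neq\emptyset$ (the forward direction from (1) in the same-side case together with (3) in the cross case, the backward direction because the projection of $u_i\cap u_j$ into the first $d$ coordinates is $v_i\cap v_j$), the map $u_i\mapsto v_i$ is an isomorphism of intersection graphs, and the two bulleted properties hold by (2) and (3). Hence $\{u_1,\dots,u_n\}$ is the desired family in $\mathbb{R}^{d+1}$. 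I do not expect any genuinely hard obstacle here; the main thing is to state the two-slab offset cleanly and pick $c$ large enough ($c\ge 2\max_i\ell(v_i)$) that every $Y$/non-$Y$ pair is pushed apart in the new coordinate while the shared endpoint on each side guarantees all old edges survive.
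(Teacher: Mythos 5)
There is a genuine gap, and it is exactly at the point you flagged as unproblematic. The lemma requires the intersection graph of $\{u_1,\dots,u_n\}$ to be isomorphic to $G$ via $u_i\mapsto v_i$, so \emph{every} edge of $G$ must survive -- including edges between a vertex of $Y$ and a vertex outside $Y$. Such cross pairs are only required to end up with disjoint \emph{interiors}; they are still allowed (indeed, forced, whenever $v_i\cap v_j\neq\emptyset$) to touch along their boundaries, since touching objects are adjacent in an intersection graph. Your two-slab construction with $J_i=[0,\ell(v_i)]$ for $i\in Y$, $J_j=[c-\ell(v_j),c]$ for $j\notin Y$ and $c\ge 2\max_i\ell(v_i)$ makes $J_i\cap J_j=\emptyset$ for essentially all cross pairs (equality $\ell(v_i)=\ell(v_j)=\max_i\ell(v_i)$ with $c=2\max_i\ell(v_i)$ being the only exception), so every cross edge of $G$ with unequal or non-maximal side lengths is destroyed and the resulting intersection graph is $G$ minus those edges, not $G$. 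Your sentence ``if $i\in Y$ and $j\notin Y$ then $v_i\cap v_j\neq\emptyset$ need not be preserved, but that is exactly what (3) wants'' conflates ``disjoint interiors'' with ``non-adjacent''; the later claim that (1) and (3) together give $u_i\cap u_j\neq\emptyset\iff v_i\cap v_j\neq\emptyset$ is therefore not established (and is false for your choice of $c$). Note also that the way the lemma is used in Corollary~\ref{cor-tradecol} depends on this: the point is to keep the graph $G$ fixed while repeatedly pushing color classes apart, ending with a touching representation of $G$ itself, which would fail if cross edges were dropped.

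The repair is to anchor both families at a common hyperplane instead of separating them by a gap: set $u_i=v_i\times[0,\ell(v_i)]$ for $i\in Y$ and $u_j=v_j\times[-\ell(v_j),0]$ for $j\notin Y$ (this is the paper's construction). Then the slice at last coordinate $0$ reproduces $S$, so $u_i\cap u_j\supseteq (v_i\cap v_j)\times\{0\}$ and no edge is lost, while the converse direction follows from your projection argument; cross pairs have interiors on opposite sides of the hyperplane, hence disjoint interiors, and pairs with disjoint interiors in $\mathbb{R}^d$ keep disjoint interiors after taking products with intervals. Everything else in your write-up (the product-with-an-interval idea, the same-side shared-endpoint argument, and the projection argument for the backward implication) is sound and matches the paper's approach once the cross pairs touch at the common hyperplane rather than being pushed $2\Lambda$ apart.
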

\begin{proof}
For $i\in Y$, we set $u_i=v_i\times [0,\ell_1(v_i)]$.  For $i\in \{1,\ldots,n\}\setminus Y$, we set
$u_i=v_i\times [0,-\ell_1(v_i)]$.  Note that the intersection of the representation with the hyperplane defined by the last
coordinate being $0$ is equal to $S$, and thus indeed the intersection graph of $S'$ is isomorphic to $G$
as described.
\end{proof}

\begin{corollary}\label{cor-tradecol}
Let $c\ge 0$ and $d\ge 1$ be integers.  If $G$ is a graph of chromatic number at most $2^c$ representable
as an intersection graph of hypercubes in $\mathbb{R}^d$, then $G$ is also representable as a touching graph
of hypercubes in $\mathbb{R}^{d+c}$.
\end{corollary}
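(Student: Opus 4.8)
The plan is to prove Corollary~\ref{cor-tradecol} by induction on $c$, using Lemma~\ref{lemma-trade} repeatedly with the color classes of a proper coloring as the sets $Y$. The base case $c=0$ is immediate: a graph of chromatic number at most $2^0=1$ has no edges, and any representation by hypercubes in $\mathbb{R}^d$ (say, pairwise far-apart cubes) is already a touching representation, so $G$ is representable as a touching graph of hypercubes in $\mathbb{R}^d=\mathbb{R}^{d+0}$. (If one prefers $c=1$ as the base, it is the single application of Lemma~\ref{lemma-trade} described below.)

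For the inductive step, suppose $c\ge 1$ and $G$ has chromatic number at most $2^c$, with a representation by hypercubes $\{v_1,\ldots,v_n\}$ in $\mathbb{R}^d$. Fix a proper coloring of $G$ with at most $2^c$ colors, and split the $2^c$ colors into two groups of at most $2^{c-1}$ colors each; let $Y\subseteq\{1,\ldots,n\}$ be the set of indices whose color lies in the first group. Apply Lemma~\ref{lemma-trade} with this $Y$ to obtain hypercubes $\{u_1,\ldots,u_n\}$ in $\mathbb{R}^{d+1}$ with isomorphic intersection graph (via $u_i\mapsto v_i$), such that edges of $G$ between $Y$ and its complement now correspond to cubes with disjoint interiors, i.e.\ those pairs merely touch. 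The two induced subgraphs $G[Y]$ and $G[\{1,\ldots,n\}\setminus Y]$ each have chromatic number at most $2^{c-1}$ (being colored by one of the two groups), so by the induction hypothesis each is representable as a touching graph of hypercubes in $\mathbb{R}^{(d+1)+(c-1)}=\mathbb{R}^{d+c}$.

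The remaining point is to assemble these two touching representations together with the intersection structure across the cut into a single touching representation in $\mathbb{R}^{d+c}$. Here I would iterate the use of Lemma~\ref{lemma-trade}: rather than recursing abstractly on $G[Y]$ and $G[\{1,\ldots,n\}\setminus Y]$ separately, it is cleaner to peel off one "bit" of the coloring at a time. Concretely, write the color of each vertex in binary as a string of $c$ bits; for the $j$-th bit ($j=1,\ldots,c$) let $Y_j$ be the set of indices whose $j$-th color bit is $1$, apply Lemma~\ref{lemma-trade} with $Y=Y_j$ adding one new coordinate each time, starting from the given representation in $\mathbb{R}^d$ and ending in $\mathbb{R}^{d+c}$. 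The first bullet of Lemma~\ref{lemma-trade} guarantees that pairs already having disjoint interiors keep disjoint interiors through all subsequent applications, so no genuine adjacency is destroyed and no new adjacency is created; the second bullet guarantees that after processing bit $j$, any pair of vertices differing in bit $j$ has cubes with disjoint interiors. Since two vertices of the same color have identical bit strings, after all $c$ steps the only pairs with overlapping interiors would be monochromatic pairs — but $G$ is properly colored, so such pairs are non-adjacent and their cubes are already disjoint; hence \emph{every} adjacent pair touches only on its boundary, and the final representation in $\mathbb{R}^{d+c}$ is a touching representation of $G$.

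The main obstacle — and it is a mild one — is just bookkeeping: checking that the guarantee "disjoint interiors stay disjoint" composes correctly across the $c$ successive applications of Lemma~\ref{lemma-trade}, so that after step $j$ every pair differing in \emph{any} of bits $1,\ldots,j$ has disjoint interiors. This follows immediately from the first bullet of the lemma by a trivial secondary induction on $j$, so the whole argument reduces to invoking Lemma~\ref{lemma-trade} a total of $c$ times and noting that proper coloring means adjacent vertices get distinct bit strings.
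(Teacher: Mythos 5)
Your final argument is exactly the paper's proof: encode the at most $2^c$ colors as $c$-bit strings and apply Lemma~\ref{lemma-trade} once per bit, noting that disjoint interiors are preserved by later applications, so adjacent (differently colored) pairs end up touching while non-adjacent pairs stay disjoint. The initial induction-on-$c$ framing is an unnecessary detour that you correctly abandon, and the bit-by-bit version you settle on is correct and essentially identical to the paper's argument.
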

\begin{proof}
Let $V(G)=\{v_1,\ldots,v_n\}$, and let $\varphi\colon V(G)\to \{0,1\}^c$ be a proper coloring of $G$.
By repeatedly applying Lemma~\ref{lemma-trade} for sets $Y_1$, \ldots, $Y_c$, where $Y_b=\{i\in \{1,\ldots,n\} \colon  (\varphi(v_i)_b=0\}$ for $b\in\{1,\ldots,c\}$,
we obtain a representation of $G$ as an intersection graph of hypercubes $u_1$, \ldots, $u_n$ in $\mathbb{R}^{d+c}$
with the property that for $1\le i<j\le n$, if $\varphi(v_i)\neq\varphi(v_j)$, then $u_i$ and $u_j$ have disjoint interiors.
If $\varphi(v_i)=\varphi(v_j)$, then since $\varphi$ is a proper coloring, we have $v_iv_j\not\in E(G)$, and thus
the hypercubes $u_i$ and $u_j$ are disjoint.  Consequently, the hypercubes $u_1$, \ldots, $u_n$ have pairwise disjoint interiors.
\end{proof}

We are now ready to give the lower bounds.
\begin{proof}[Proof of Theorem~\ref{thm-lb}]
We prove each point separately:
\begin{itemize}
\item[(i)] Let $F'_k$ be the graph obtained in Lemma~\ref{lemma-tree}, represented
as a touching graph of an $m$-shrinking sequence of axis-aligned rectangles for $m=2^{k+1}-1$.
Let $\prec$ be the sizewise ordering of $F'_k$.
By Lemma~\ref{lemma-verass}, we have $\wcol_{\prec,k}(F'_k)=|V(F'_k)|=2^{k+1}-1$.
Letting $F_k=\AA_{F'_k,\prec,m}$, Lemma~\ref{lemma-ordtoall} implies $\wcol_{2k}(F_k)\ge 2^{k+1}-1$.
Moreover, by Lemma~\ref{lemma-reproa}, $F_k$ is a touching graph of comparable axis-aligned boxes in $\mathbb{R}^3$.

\item[(ii)] Let $H'_{k,t}$ be the graph obtained in Lemma~\ref{lemma-thin},
represented as the intersection graph of a $t$-thin $m$-shrinking sequence of intervals for $m=\binom{k+t}{t}$.
Let $\prec$ be the sizewise ordering of $H'_{k,t}$.
By Lemma~\ref{lemma-verass}, we have $\wcol_{\prec,k}(H'_{k,t})=|V(H'_{k,t})|=\binom{k+t}{t}$.
Letting $H_{k,t}=\AA_{H'_{k,t},\prec,m}$, Lemma~\ref{lemma-ordtoall} implies $\wcol_{2k}(H_{k,t})\ge \binom{k+t}{t}$.
Moreover, by Lemma~\ref{lemma-reproa}, $H_{k,t}$ is the intersection graph of a $t$-thin set of axis-aligned squares in $\mathbb{R}^2$.

\item[(iii)] Recall that by Lemma~\ref{lemma-thin}, the graph $H'_{k,2^d-1}$ is properly $2^d$-colorable.
Let $T$ be the scaffolding of $H_{k,2^d-1}$.
For each $v\in V(H'_{k,2^d-1})$, we can assign the color of $v$ to all vertices in $T(v)$, obtaining
a proper coloring of $H_{k,2^d-1}$ by $2^d$ colors.  Corollary~\ref{cor-tradecol} implies that $H_{k,2^d-1}$
can be represented as a touching graph of axis-aligned hypercubes in $\mathbb{R}^{d+2}$.
\end{itemize}
\end{proof}

\section{Conclusions}
In this paper we have provided upper bounds on the weak coloring number of $t$-thin intersection graphs of $d$-dimensional objects of different kinds.
Our bounds are qualitatively tight in several aspects.
We would like to mention a few open questions, beyond improving the proven upper and lower bounds:
\begin{itemize}
 \item What is the asymptotic behavior of the $k$-th weak coloring numbers of planar graphs?
It is known to be $O(k^3)$~\cite{vdHOdMQRS16} and $\Omega(k^2\log k)$~\cite{JM21}.
 \item What is the asymptotic behavior of the $k$-th strong coloring numbers of touching graphs of unit balls in $\mathbb{R}^d$?
It is known to be $O(k^{d-1})$ and $\Omega(k^{d/2})$.
\end{itemize}

\subsection*{Acknowledgments}
This research was carried out at the workshop on Generalized Coloring Numbers organized by Micha{\l} Pilipczuk and Piotr Micek in February 2021.
We would like to thank the organizers and all participants for creating a friendly and productive environment.
Special thanks go to Stefan Felsner for fruitful discussions.

\bibliographystyle{siam}
\bibliography{data.bib}

\begin{thebibliography}{10}

\bibitem{BonMur}
{\sc J.~Bondy and U.~Murty}, {\em Graph {T}heory with {A}pplications},
  North-Holland, New York, Amsterdam, Oxford, 1976.

\bibitem{dujmovic2019graph}
{\sc V.~Dujmovi{\'c}, P.~Morin, and D.~R. Wood}, {\em Graph product structure
  for non-minor-closed classes}, arXiv, 1907.05168 (2019).

\bibitem{subconvex}
{\sc Z.~Dvo{\v{r}}{\'a}k, R.~McCarty, and S.~Norin}, {\em Sublinear separators
  in intersection graphs of convex shapes}, arXiv, 2001.01552 (2020).

\bibitem{espsublin}
{\sc L.~Esperet and J.-F. Raymond}, {\em Polynomial expansion and sublinear
  separators}, European Journal of Combinatorics, 69 (2018), pp.~49--53.

\bibitem{covcol}
{\sc M.~Grohe, S.~Kreutzer, R.~Rabinovich, S.~Siebertz, and K.~Stavropoulos},
  {\em Coloring and covering nowhere dense graphs}, SIAM Journal on Discrete
  Mathematics, 32 (2018), pp.~2467--2481.

\bibitem{JM21}
{\sc G.~Joret and P.~Micek}, {\em Improved bounds for weak coloring numbers},
  (2021).

\bibitem{kierstead2003orderings}
{\sc H.~Kierstead and D.~Yang}, {\em Orderings on graphs and game coloring
  number}, Order, 20 (2003), pp.~255--264.

\bibitem{koebe}
{\sc P.~Koebe}, {\em Kontaktprobleme der {K}onformen {A}bbildung}, Math.-Phys.
  Kl., 88 (1936), pp.~141--164.

\bibitem{nesbook}
{\sc J.~Ne{\v{s}}et\v{r}il and P.~{Ossona de Mendez}}, {\em Sparsity (Graphs,
  Structures, and Algorithms)}, vol.~28 of Algorithms and Combinatorics,
  Springer, 2012.

\bibitem{PPSlecturenotes}
{\sc M.~Pilipczuk, M.~Pilipczuk, and S.~Siebertz}, {\em Lecture notes for the
  course “Sparsity” given at Faculty of Mathematics, Informatics, and
  Mechanics of the University of Warsaw, Winter semesters 2017/18 and 2019/20}.

\bibitem{vdHOdMQRS16}
{\sc J.~{van den Heuvel}, P.~{Ossona de Mendez}, D.~{Quiroz}, R.~{Rabinovich},
  and S.~{Siebertz}}, {\em {On the Generalised Colouring Numbers of Graphs that
  Exclude a Fixed Minor}}, arXiv e-prints,  (2016), p.~arXiv:1602.09052.

\bibitem{van2017generalised}
{\sc J.~van~den Heuvel, P.~{Ossona de Mendez}, D.~Quiroz, R.~Rabinovich, and
  S.~Siebertz}, {\em On the generalised colouring numbers of graphs that
  exclude a fixed minor}, European Journal of Combinatorics, 66 (2017),
  pp.~129--144.

\bibitem{zhu2009colouring}
{\sc X.~Zhu}, {\em Colouring graphs with bounded generalized colouring number},
  Discrete Math., 309 (2009), pp.~5562--5568.

\end{thebibliography}

\end{document}